\newlength{\defbaselineskip}
\newcommand{\setlinespacing}[1]%
           {\setlength{\baselineskip}{#1 \defbaselineskip}}
\theoremstyle{plain}
\newtheorem{theorem}{Theorem}[section]
\newtheorem{problem}[theorem]{Problem}
\newtheorem{lemma}[theorem]{Lemma}
\theoremstyle{definition}
\newtheorem{definition}{Definition}[section]
\newtheorem{assumption}{Assumption}[section]
\makeatletter\@addtoreset{equation}{section} \makeatother
\begin{document}

\title{On the existence of optimal controls for backward stochastic
partial differential equations\thanks{This work was supported by the Natural Science Foundation of Zhejiang Province
for Distinguished Young Scholar  (No.LR15A010001),  and the National Natural
Science Foundation of China (No.11471079, 11301177).}}

\date{}
\author[a]{Qingxin Meng}
\author[b]{Yang Shen\footnote{Corresponding author.
\authorcr
\indent E-mail address: mqx@hutc.zj.cn(Q. Meng); yangshen@yorku.ca, skyshen87@gmail.com(Y. Shen);
peng.shi@adelaide.edu.au(P. Shi)}}
\author[c,d]{Peng Shi}

\affil[a]{\small{Department of Mathematical Sciences, Huzhou University, Zhejiang 313000, China}}
\affil[b]{\small{Department of Mathematics and Statistics, York University, Toronto, Ontario, M3J 1P3, Canada}}
\affil[c]{School of Electrical and Electronic Engineering, The University of Adelaide, Adelaide, SA 5005, Australia}
\affil[d]{College of Engineering and Science, Victoria University, Melbourne, VIC 8001, Australia}

\maketitle

\begin{abstract}
\noindent
This paper is concerned with the existence of optimal controls for backward stochastic
partial differential equations with random coefficients, in which the control systems
are represented in an abstract evolution
form, i.e. backward stochastic evolution equations. Under some growth and monotonicity conditions
on the coefficients and suitable assumptions on the Hamiltonian function, the existence of the optimal
control boils down to proving the uniqueness and existence of a solution to the stochastic Hamiltonian system,
i.e. a fully coupled forward-backward stochastic evolution equation. Using some a prior estimates, we prove
the uniqueness and existence via the method of continuation. Two examples of linear-quadratic control
are solved to demonstrate our results.
\end{abstract}

\section{Introduction}

In this paper, we consider an optimal control problem
under a stochastic backward system in infinite dimensions. More specifically, the control
system is given by a backward stochastic partial differential equation
in the abstract evolution form:
\begin{eqnarray}\label{eq:1.1}
\left\{
\begin{aligned}
dy(t)=& \ [A(t)y(t)+B(t)z(t)+D(t)u(t)+G(t)]dt+z(t)dW(t),\\
y(T)=& \ \xi ,
\end{aligned}
\right.
\end{eqnarray}
with the cost functional:
\begin{eqnarray} \label{eq:1.2}
J(u(\cdot))= \mbox{E}\bigg[\int_0^Tl(t,y(t),z(t),u(t))dt+h(y(0))\bigg],
\end{eqnarray}
where $A$ is a given stochastic evolution operator, $B, D,G, \xi, l$ and $h$ are given
random maps, and $W (\cdot)$ is a one-dimensional standard Brownian motion.
The state process $(y(\cdot), z(\cdot))$ and the control process
$u(\cdot)$ take values in Hilbert spaces $V \times H$ and $U$, respectively.
The objective of the optimal control problem is to find a control process that minimizes
the cost functional \eqref{eq:1.2} over the set of admissible controls.
The work in \cite{meng2013stochastic} established necessary
and sufficient maximum principles for a more general backward control system of infinite dimensions.
However, the existence of optimal controls was not discussed thoroughly.
This paper attempts to fill in the gap in \cite{meng2013stochastic}, and
establish the existence conditions of an optimal control under system \eqref{eq:1.1}-\eqref{eq:1.2}.

The existence of optimal controls for various control systems is a fundamental problem in stochastic optimal control theory
which has attracted comprehensive attention in the past years.
One approach to study the existence of optimal controls is based on the dynamic programming principle
and the solvability of the corresponding HJB equation in a sufficiently regular sense.
The work in \cite{karoui1987compactification} used the compactness argument and proved the existence of
an optimal Markovian relaxed control for systems with degenerate diffusions.
Based on an approximation of stochastic control systems with smooth coefficients,
the existence of optimal controls for stochastic control systems was investigated in \cite{buckdahn2010existence}
with the cost functional given by a controlled backward stochastic differential equation.
Also some earlier works along this research line can be found in \cite{bismut1976theorie} and \cite{davis1973on}, and the references therein. However, since all the coefficients in system \eqref{eq:1.1}-\eqref{eq:1.2} are random,
the corresponding HJB equation becomes a nonlinear backward stochastic partial differential equation,
the solvability of which is still an open problem. Therefore, it is not suitable to follow the dynamic programming
principle approach to investigate the underlying problem in our paper.

Another approach relies on the stochastic maximum principle, where the existence of
optimal controls is studied through the stochastic Hamiltonian system. Indeed, the stochastic
Hamiltonian system is a fully-coupled forward-backward stochastic differential equation (FBSDE),
consisting of the state equation, the adjoint equation and the optimality conditions of the optimal control.
Even in the finite-dimensional case, the uniqueness and existence of solutions to nonlinear
fully-coupled forward-backward systems is a very challenging problem. There has been many works on this
topic, see for example, \cite{hu1995solution,ma1994solving,pardoux1999forward,peng1999fully} and the references therein.
However, very limited works have focused on the solvability of infinite-dimensional FBSDEs.
\cite{guatteri2007class} proved that a class of fully-coupled, infinite-dimensional FBSDEs has a local unique solution.
\cite{guatteri2013existence} considered a stochastic optimal control problem for an heat equation
with boundary noise, boundary controls and deterministic coefficients. In \cite{guatteri2013existence},
under suitable assumptions on the coefficients,
the existence condition of optimal controls was presented in strong sense by solving the associated stochastic Hamiltonian system
of infinite dimensions; the bridge method and the auxiliary deterministic Riccati equation were applied to obtain the solution.

In this paper, the stochastic Hamiltonian system is described by the following infinite-dimensional FBSDE:
\begin{eqnarray}\label{eq:1.4}
\left\{
\begin{aligned}
dk(t)=& -\big[A^*(t)k(t)+l_y(t,y(t),z(t),u(t))\big]dt \\
&-\big[B^*(t)k(t)+l_z(t,y(t),z(t),u(t))\big]dW(t),\\
dy(t)=& \ \big [A(t)y(t)+B(t)z(t)+D(t)\gamma( D^*(t)k(t))+G(t)\big ]dt+z(t)dW(t),\\
k(0)=&-h_y(y(0)), \quad y(T)=\xi .
\end{aligned}
\right.
\end{eqnarray}
where $A^*$, $B^*$ and $D^*$ denote the dual operators of $A$, $B$ and $D$,
respectively, and $\gamma$ is a function satisfying suitable conditions,
to be specified below in Assumption (A.5).
Unlike the Hamiltonian system in \cite{guatteri2013existence}, since all the coefficients
in \eqref{eq:1.4} are random and time-varying, the adaptability of the
integrand in the stochastic integral may not be satisfied and the solution of this equation cannot be defined
in the mild sense. Instead, we will study FBSDE \eqref{eq:1.4} in the sense of weak solution (i.e. in the PDE sense).
We first show the existence and uniqueness of a solution to FBSDE \eqref{eq:1.4}
via using continuous dependence theorems for stochastic evolution equations (SEEs) and
backward stochastic evolution equations (BSEEs) in \cite{meng2013stochastic}.
Then from stochastic maximum principle in \cite{meng2013stochastic}, the existence
of an optimal control is immediately obtained. Compared with existing works on infinite-dimensional
FBSDEs (see e.g. \cite{guatteri2013existence}),
the approach developed in our paper is more convenient and much simpler.

The rest of this paper is organized as follows. Section 2 introduces
some basic notation, formulates the control problem in an infinite-dimensional
backward system and recalls stochastic maximum principles established by
\cite{meng2013stochastic}. In Section 3, main results in our paper are provided,
and two infinite-dimensional linear-quadratic control problems are solved in Section 4.
Section 5 concludes the paper with some remarks.

\section{Preliminaries and problem formulation}

In this section, we first introduce the basic notation to be used throughout this paper.
We formulate the control problem under a state equation descirbed by a backward stochastic partial differential
equation (BSPDE) in the abstract evolution form, i.e. a BSEE. At the end of this section,
we give necessary and sufficient maximum principles for our control system.

First of all, we fix a complete probability space $(\Omega, {\mathscr F}, P)$.
Let $W (\cdot) \triangleq \{ W (t) \}_{t \geq 0}$ be a one-dimensional standard Brownian motion defined on
$(\Omega, {\mathscr F}, P)$. We further equip $(\Omega, {\mathscr F}, P)$
with a filtration ${\mathbb F} \triangleq \{ {\mathscr F}_t \}_{t \geq 0}$, which is the natural filtration
generated by $W (\cdot)$ and augmented in the usual way. Denote by  $\mathscr{P}$ the
predictable $\sigma$-field on $[0, T]\times\Omega$, $\mathscr {B} (\Lambda)$
the Borel $\sigma$-algebra of any topological space $\Lambda$,
and $\|\cdot\|_H$ the norm of any Hilbert space $H$. Let $T$ be a finite time horizon, i.e. $0 < T < \infty$.
Throughout this paper, we let $C$ and $K$ be two generic constants, which may be different from line to line.
We introduce the following spaces on $(\Omega, {\mathscr F}, {\mathbb F}, P)$ for Hilbert space-valued
processes or random variables:
\begin{itemize}
\item $M_{\mathscr{F}}^2(0,T;H)$: the set of all
${\mathbb {F}}$-adapted, $H$-valued processes $\varphi=\{\varphi(t,\omega),\
(t,\omega)\in[0,T]\times\Omega\}$ such that
$\|\varphi\|_{M_{\mathscr{F}}^2(0,T;H)} \triangleq \sqrt{\mbox {E} [ \int_0^T\|\varphi(t)\|_H^2dt ] } <\infty$;
\item $S_{\mathscr{F}}^2(0,T;H)$:  the set of all
${\mathbb {F}}$-adapted, $H$-valued, c\`{a}dl\`{a}g processes
$\varphi=\{\varphi(t,\omega),\ (t,\omega)\in[0,T]\times\Omega\}$ such that
$\|\varphi\|_{S_{\mathscr{F}}^2(0,T;H)}\triangleq\sqrt{ \mbox {E} [ \sup_{0 \leq t \leq T}\|\varphi(t)\|_H^2 ] }<+\infty$;
\item $L^2(\Omega,\mathscr{F}_t,P;H)$: the set of all $\mathscr{F}_t$-measurable, $H$-valued random
variables $\xi$ on $(\Omega,{\mathscr{F}},P)$ such that
$\|\xi\|_{L^2(\Omega,\mathscr{F}_t,P;H)} \triangleq \sqrt{\mbox {E} [ \|\xi\|_H^2 ] }<\infty$.
\end{itemize}

In what follows, we introduce the Gelfand triple, in which SEEs
and BSEEs throughout this paper are defined. Let $V$ and $H$ be two separable, real-valued
Hilbert spaces such that $V$ is densely embedded in $H$.
We identify $H$ with its dual space by the Riesz mapping.
Thus, we can take $H$ as a pivot space and  get a
Gelfand triple $(V, H, V^*)$ such that $V \subset H= H^*\subset V^{*},$ where $H^*$ and
$V^{*}$ denote the dual spaces of $H$ and $V$, respectively. Denote
by $(\cdot,\cdot)_H$ the inner product in $H$, and
$\left <\cdot,\cdot\right >$ the duality product between
$V$ and $V^{*}$. Define $\mathscr{L}(V,V^*)$ as the space of bounded
linear transformations from $V$ to $V^*$. With $V$ and $V^*$ being replaced,
other spaces of bounded linear transformations can be defined similarly in the sequel.

We consider the following controlled BSEE in the Gelfand triple $(V, H, V^*)$:
\begin{equation}\label{eq:3.1}
y(t)= \xi-\int_t^T\big [A(s)y(s) +B(s)z(s)+D(s)u(s)+G(s)\big]ds
- \int_t^Tz(s)dW(s) ,
\end{equation}
with the cost functional:
\begin{equation}\label{eq:2.2}
J(u(\cdot))= {\mbox E} \bigg[\int_0^Tl(t,y(t),z(t),u(t))dt+h(y(0))\bigg],
\end{equation}
where $\xi: \Omega \rightarrow H$,
$A:[0,T]\times \Omega\rightarrow \mathscr{L}(V,V^*)$,
$B:[0,T]\times\Omega\rightarrow \mathscr{L}(H, H)$,
$D:[0,T]\times \Omega\rightarrow \mathscr{L}(U, H)$,
$G:[0,T]\times \Omega\rightarrow H$
and $l:[0,T]\times \Omega\times V\times H\times U \rightarrow \mathbb{R}$,
$h :\Omega\times V \rightarrow \mathbb{R}$ are given random mappings.
Suppose that the control set $U$ is a separable Hilbert space and is a convex set.
An $\mathbb{F}$-adapted, $U$-valued process $u(\cdot)$ such that $\mbox {E}
[ \int_0^T \| u(t) \|_{U}^{2} d t ]<\infty$ is called an \emph{admissible control}. Denote by
$\cal A$ the set of all admissible controls.

In Section 4, $A$ and $B$ will be specified by the second-order and the first-order differential operators
and, meanwhile, the control problem will turns out to be a Dirichlet problem for BSPDEs. This can facilitate the understanding
of the abstract evolution form \eqref{eq:3.1}-\eqref{eq:2.2}. One may also refer to \cite{meng2013stochastic}
for a Cauchy problem for BSDPEs.

The Hamiltonian function
\begin{eqnarray*}
{\cal H}: [0, T] \times \Omega \times V \times H \times U \times V \rightarrow \mathbb {R}
\end{eqnarray*}
of the control system
\eqref{eq:3.1}-\eqref{eq:2.2} is defined by
\begin{eqnarray}\label{eq:4.2}
{\cal H} (t,y,z,u,k)= \big ( B(t)z+D(t)u, k \big )_H+l(t,y,z,u).
\end{eqnarray}

Let us make the following assumptions on the coefficients of the control system
\eqref{eq:3.1}-\eqref{eq:2.2}:
\begin{enumerate}

\item[\bf (A.1)] The terminal value $\xi\in L^2(\Omega,\mathscr{F}_T,P;H)$,  $ B$  and  $D $ are  uniformly bounded ${\mathscr{F}}_t$-predictable processes, and
    $G$  is ${\mathscr{F}}_t$-predictable processes
    with $G \in M_{\mathscr{F}}^2(0,T;H).$

\item[\bf (A.2)] The operator $A$ satisfies the following coercivity and boundedness conditions:
(i) there exist constants $\alpha >0$ and $\lambda$ such that
\begin{eqnarray*}
\left < A(t)y,y \right > + \lambda\|y\|^2_H\geq\alpha\| y\|^2_V,
~~~~~\forall t\in [0, T],~~\forall y\in V,
\end{eqnarray*}
and (ii) there exists a constant $C >0$ such that
\begin{eqnarray*}
\sup_{(t,\omega)\in [0, T]\times \Omega} \|A(t,\omega)\|_{\mathscr{L}(V, V^{*})} \leq C.
\end{eqnarray*}

\item[\bf (A.3)] The map $l$ is ${\mathscr{P}}\otimes {\mathscr B}(V)\otimes
{\mathscr B}(H)\otimes {\mathscr B}(U)/ {\mathscr B}(\mathbb{R})$-measurable and for
almost all $(t, \omega)\in [0, T]\times \Omega$, $l(t,\omega,y,z,u)$
is convex and  G\^{a}teaux  differentiable in $( y,z,u)$ with continuous
G\^{a}teaux derivatives $ l_y, l_z, l_u$.
The map $h$ is ${\mathscr{F}}_0 \otimes {\mathscr B}(V)/ {\mathscr B}(\mathbb{R})$-measurable
and for almost all $\omega \in\Omega$, $h(\omega,y)$ is
convex and G\^{a}teaux differentiable in $y$ with continuous
G\^{a}teaux derivative $h_y$. Moreover, for almost all
$(t,\omega)\in [0, T]\times \Omega$, there exists a constant $C > 0$
such that, for all $(y,z,u)\in V\times H\times U$,
$$|l(t,y,z,u)|\leq C(1+\|y\|^2_V+\|z\|^2_H+\|u\|_U^2),$$
$$\|l_y(t,y,z,u)\|_V+\|l_z(t,y,z,u)\|_H+\|l_u(t,y,z,u)\|_U\leq
C(1+\|y\|_V+\|z\|_H+\|u\|_U),$$
and
$$|h(y)|\leq C(1+\|y\|^2_V),  $$
$$\|h_y(y)\|_V\leq C(1+\|y\|_V).$$

\item[\bf (A.4)] For almost all $(t,\omega)\in [0, T]\times \Omega$,
there exists a constant $C > 0$ such that, for all $y_1,y_2\in V$, $z_1,z_2\in H$, $u\in U$,
\begin{eqnarray*}
&& ( l_y(t,y_1,z_1,u)-l_y(t,y_2,z_2, u), y_1-y_2 )_H \\
&& + ( l_z(t,y_1,z_1,u)-l_z(t,y_2,z_2, u), z_1-z_2 )_H
\geq C ( ||y_1-y_2||^2_V+ ||z_1-z_2||^2_H ) ,
\end{eqnarray*}
and
\begin{eqnarray*}
( h_y(y_1)-h_y(y_2), y_1-y_2 )_H \geq C ||y_1-y_2||^2_V.
\end{eqnarray*}

\item[\bf (A.5)]
For all $(t, y, z, u, k) \in [0, T] \times V \times H \times U \times V$, there exists a function
$\gamma: U \rightarrow U$ such that
\begin{equation}\label{eq:4.2}
\begin{array}{ll} \displaystyle
{\cal H} (t, y,z,\gamma (D^*(t)k), k)= \min_{u \in U} {\cal H} (t,y,z,u,k).
\end{array}
\end{equation}
For all $k_1,k_2 \in V$, there exists a constant $C > 0$ such that
\begin{eqnarray}
& \big ( D(t)\gamma(D^*(t)k_1)-D(t)\gamma (D^*(t)k_2), k_1-k_2 \big )_H \leq  0 , \\
& ||D(t)\gamma(D^*(t)k_1)-D(t)\gamma(D^*(t)k_2)||_H \leq C ||k_1-k_2||_V,
\end{eqnarray}
where $D^*: [0,T]\times \Omega\rightarrow \mathscr{L}(H, U)$ is the dual operator of $D$.
\end{enumerate}

In what follows, let {\bf Assumption A} stand for Assumptions {\bf (A.1)}-{\bf (A.5)}.
Under {\bf Assumption A}, it follows from Theorem 4.1 in \cite{hu1991adapted}
or Theorem 2.2 in \cite{peng1992stochastic} that the system \eqref{eq:3.1} admits
a unique solution $(y(\cdot), z(\cdot)) \in M_{\mathscr{ F}}^2(0,T;V) \times M_{\mathscr{F}}^2(0,T;H)$,
for each  $u(\cdot)\in {\cal A}$. Whenever we need to stress the dependence on the control
$u (\cdot)$, we denote by $(y^u(\cdot), z^u(\cdot)) : = (y(\cdot), z(\cdot))$ in the sequel.
Then, we call $(y^u (\cdot), z^u (\cdot) )$ the state process corresponding to the control process
$u(\cdot)$ and $(u(\cdot); y(\cdot), z(\cdot))$ the admissible
pair. Furthermore, from {\bf Assumption A},  we can easily check that
\begin{eqnarray}
|J(u(\cdot))|<\infty.
\end{eqnarray}

We now state the optimal control problem to be considered:
\begin{problem}\label{pro:2.1}
Find an admissible control $ {u}(\cdot) \in {\cal A}$ such that
\begin{eqnarray}\label{eq:b77}
J( {\bar u}(\cdot))=\displaystyle\inf_{u(\cdot)\in {\cal
A}}J(u(\cdot)).
\end{eqnarray}
\end{problem}
Any ${\bar u}(\cdot)\in {\cal A}$ satisfying Eq. (\ref{eq:b77}) is called an
optimal control of Problem \ref{pro:2.1} and the
corresponding state process $( {\bar y}(\cdot),  {\bar z}(\cdot))$ is called
an optimal state process. Correspondingly, $( {\bar u}(\cdot);
{\bar y}(\cdot), {\bar z}(\cdot))$ is called an optimal pair of
Problem \ref{pro:2.1}.

For any  given admissible pair $({u}(\cdot);{y}(\cdot), {z}(\cdot))$,
we consider the following adjoint equation:
\begin{eqnarray}\label{eq:f333}
\left\{
\begin{aligned}
dk(t)=& -\big [A^*(t)k(t)+l_y(t,  y(t),  z(t),u(t))\big ]dt \\
& -\big [B^*(t)  k(t)+l_z(t,  y(t),  z(t),u(t))\big]dW(t), \\
k(0)=& -h_y(  y(0)) ,
\end{aligned}
\right.
\end{eqnarray}
where $A^*: [0,T]\times \Omega\rightarrow \mathscr{L}(V^*,V)$
and $B^*: [0,T]\times \Omega\rightarrow \mathscr{L}(H,H)$ denote the dual operators of $A$ and $B$,
respectively. Indeed, the adjoint equation \eqref{eq:f333} is a linear SEE.
Under {\bf Assumptions A}, by Theorem I in \cite{bensoussan1983stochastic}, it can be shown
that the above adjoint equation admits a unique solution
$k(\cdot)\in M_{\mathscr{F}}^2(0,T; V)$.

\begin{theorem}\label{thm:4.2}
Given that {\bf Assumption A} is satisfied.
Let $( {u}(\cdot);  {y}(\cdot), {z}(\cdot))$  be
an optimal pair of  Problem \ref{pro:2.1} and
$k(\cdot)$ be the solution of the adjoint equation \eqref{eq:f333}
associated with $( {u}(\cdot);  {y}(\cdot), {z}(\cdot))$. Then we have
\begin{eqnarray}\label{eq:4.15}
\big ( {\cal H}_u(t,  y(t),   z(t),   u(t), k(t)), u- {u}(t) \big )_U \geq 0,
\end{eqnarray}
for all $u\in U$, a.e. $t \in [0, T]$, $P$-a.s..
\end{theorem}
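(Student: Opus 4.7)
The plan is to prove Theorem \ref{thm:4.2} by the standard convex variation / duality argument, adapted to the infinite-dimensional BSEE setting. Since the control domain $U$ is convex, for any fixed admissible $u(\cdot)\in\mathcal{A}$ and any $\varepsilon\in[0,1]$ the perturbation $u^{\varepsilon}(t):=\bar u(t)+\varepsilon(u(t)-\bar u(t))$ is again admissible. Let $(y^{\varepsilon},z^{\varepsilon})$ denote the state corresponding to $u^{\varepsilon}$ and set $\tilde y^{\varepsilon}:=(y^{\varepsilon}-\bar y)/\varepsilon$, $\tilde z^{\varepsilon}:=(z^{\varepsilon}-\bar z)/\varepsilon$. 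These solve a linear BSEE driven by $D(t)(u(t)-\bar u(t))$, and by the continuous-dependence / a priori $L^{2}$ estimates for BSEEs in \cite{meng2013stochastic} they converge in $M_{\mathscr{F}}^{2}(0,T;V)\times M_{\mathscr{F}}^{2}(0,T;H)$ as $\varepsilon\downarrow 0$ to the variational process $(y_{1},z_{1})$ solving
\begin{equation*}
dy_{1}(t)=[A(t)y_{1}(t)+B(t)z_{1}(t)+D(t)(u(t)-\bar u(t))]dt+z_{1}(t)dW(t),\qquad y_{1}(T)=0.
\end{equation*}

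Next I would differentiate the cost. Using the G\^ateaux differentiability and the polynomial growth assumed in (A.3), together with the convergence $(\tilde y^{\varepsilon},\tilde z^{\varepsilon})\to(y_{1},z_{1})$, a dominated-convergence argument yields
\begin{equation*}
0\leq\lim_{\varepsilon\downarrow 0}\frac{J(u^{\varepsilon}(\cdot))-J(\bar u(\cdot))}{\varepsilon}
=\mathrm{E}\Big[\int_{0}^{T}\big((l_{y},y_{1})_{H}+(l_{z},z_{1})_{H}+(l_{u},u-\bar u)_{U}\big)dt+(h_{y}(\bar y(0)),y_{1}(0))_{H}\Big],
\end{equation*}
where $l_{y},l_{z},l_{u}$ are evaluated along the optimal pair. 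The inequality uses the optimality of $\bar u(\cdot)$.

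The key step is to eliminate the terms containing $y_{1}$ and $z_{1}$ via duality with the adjoint process. I apply It\^o's formula (in the Gelfand-triple / Pardoux sense) to $(k(t),y_{1}(t))_{H}$ between $0$ and $T$. The drift of $k$ contributes $-\langle A^{*}k+l_{y},y_{1}\rangle$ and the drift of $y_{1}$ contributes $\langle k,Ay_{1}\rangle+(k,Bz_{1})_{H}+(k,D(u-\bar u))_{H}$; the two $A$-terms cancel by the definition of the adjoint, while the joint quadratic-variation term is $-(B^{*}k+l_{z},z_{1})_{H}$, which cancels $(k,Bz_{1})_{H}=(B^{*}k,z_{1})_{H}$. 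Taking expectation and using $y_{1}(T)=0$ and $k(0)=-h_{y}(\bar y(0))$ gives
\begin{equation*}
\mathrm{E}\Big[(h_{y}(\bar y(0)),y_{1}(0))_{H}+\int_{0}^{T}\big((l_{y},y_{1})_{H}+(l_{z},z_{1})_{H}\big)dt\Big]=\mathrm{E}\Big[\int_{0}^{T}(D^{*}(t)k(t),u(t)-\bar u(t))_{U}dt\Big].
\end{equation*}
Substituting this into the variational inequality and observing that $\mathcal{H}_{u}(t,y,z,u,k)=D^{*}(t)k+l_{u}(t,y,z,u)$ yields the integrated condition $\mathrm{E}\int_{0}^{T}(\mathcal{H}_{u}(t,\bar y,\bar z,\bar u,k),u-\bar u)_{U}dt\geq 0$ for every $u(\cdot)\in\mathcal{A}$.

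The final step is to convert this into the pointwise form \eqref{eq:4.15}. For any fixed $v\in U$ and any $B\in\mathscr{P}$, the process $u(t):=\bar u(t)+\mathbf{1}_{B}(t,\omega)(v-\bar u(t))$ is admissible, which forces $\mathrm{E}\int_{B}(\mathcal{H}_{u},v-\bar u)_{U}dt\geq 0$; a standard separability / measurable-selection argument over a countable dense subset of $U$ then produces the required pointwise inequality $dt\otimes dP$-a.e. The main technical obstacle is the It\^o-formula step: since $k$ lives in $V$, $y_{1}$ in $V$, and their equations are only in $V^{*}$, one must justify the duality product $\langle A^{*}k,y_{1}\rangle=\langle k,Ay_{1}\rangle$ and the cross-variation at the Gelfand-triple level, which I would do by invoking the It\^o formula for SEE/BSEE pairs in \cite{meng2013stochastic} (or the classical Pardoux formula), exactly as is used to derive the maximum principle therein.
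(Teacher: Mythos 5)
Your argument is correct and is the standard convex-variation-plus-duality proof of the necessary condition: the paper itself does not prove Theorem \ref{thm:4.2} but imports it from \cite{meng2013stochastic}, where the same scheme (spike-free convex perturbation, linear variational BSEE, It\^{o}'s formula for $(k(t),y_1(t))_H$ in the Gelfand triple to trade the $y_1,z_1$ terms for $(D^*k,u-\bar u)_U$, then localization over predictable sets) is used. The only points worth making explicit are that, the state equation being linear, $(\tilde y^{\varepsilon},\tilde z^{\varepsilon})$ equals $(y_1,z_1)$ exactly so no limit is needed there, and that the vanishing of the expectation of the stochastic-integral terms in the duality step requires the usual localization/integrability check.
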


\begin{theorem}\label{thm:4.1}
Given that {\bf Assumption A} is satisfied. Let $( {u}(\cdot);
{y}(\cdot),  {z}(\cdot))$ be an admissible pair and
$k(\cdot)$ be the unique solution of the corresponding adjoint
equation \eqref{eq:f333}. If for almost all $(t,\omega)\in
[0,T]\times \Omega$, ${\cal H} (t,y,z,u,{k}(t))$ is convex in $(y,z,u)$,
$h(y)$ is convex in $y$ and the following optimality condition holds
\begin{eqnarray}\label{eq:5.119}
{\cal H} (t, {y}(t),  z(t), {u}(t),  {k}(t))
= \min_{u\in U} {\cal H} (t, {y}(t),  z(t), u,{k}(t)) ,
\end{eqnarray}
then $u (\cdot)$ is the optimal control of Problem \ref{pro:2.1}
and $( {u}(\cdot);   {y}(\cdot),  {z}(\cdot))$ is the optimal pair.
\end{theorem}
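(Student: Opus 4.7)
The goal is to show $J(u(\cdot)) \le J(\tilde u(\cdot))$ for an arbitrary admissible $\tilde u(\cdot)$ with associated state $(\tilde y(\cdot), \tilde z(\cdot))$. The plan follows the classical ``convexity plus duality'' template for sufficient maximum principles: bound $J(\tilde u) - J(u)$ below via convexity of $l$ and $h$, and remove the state perturbations $\tilde y - y$ and $\tilde z - z$ through an It\^o formula applied to $(k(t), \tilde y(t) - y(t))_H$, using the adjoint equation \eqref{eq:f333}.

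First I would apply It\^o's formula in the Gelfand triple to $(k(t), \tilde y(t) - y(t))_H$ on $[0, T]$. The forward equation for $\tilde y - y$ has drift $A(\tilde y - y) + B(\tilde z - z) + D(\tilde u - u)$ and diffusion $\tilde z - z$, while the adjoint $k$ has drift $-(A^* k + l_y)$ and diffusion $-(B^* k + l_z)$. The duality pairing $\langle A(\tilde y - y), k\rangle_{V^*, V} = \langle A^* k, \tilde y - y\rangle_{V^*, V}$ annihilates the $A$-terms; $(k, B(\tilde z - z))_H = (B^* k, \tilde z - z)_H$ together with the quadratic-variation cross term $-(B^* k + l_z, \tilde z - z)_H\, dt$ eliminates the $B$-terms; and the data $\tilde y(T) - y(T) = 0$, $k(0) = -h_y(y(0))$ give, after taking expectations, the duality identity
\begin{equation*}
\mathbb{E}\bigl(h_y(y(0)), \tilde y(0) - y(0)\bigr)_H = \mathbb{E}\int_0^T \Bigl[-(l_y, \tilde y - y)_H - (l_z, \tilde z - z)_H + (D^* k, \tilde u - u)_U\Bigr]\, dt,
\end{equation*}
where all derivatives of $l$ are evaluated at the candidate optimal triple.

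Next I would observe that $(y, z, u) \mapsto (Bz + Du, k)_H$ is affine, so convexity of $\mathcal{H}(t, \cdot, \cdot, \cdot, k(t))$ forces $l(t, \cdot, \cdot, \cdot)$ to be convex. Subgradient inequalities for $l$ and $h$ then give
\begin{equation*}
J(\tilde u) - J(u) \ge \mathbb{E}\int_0^T \bigl[(l_y, \tilde y - y)_H + (l_z, \tilde z - z)_H + (l_u, \tilde u - u)_U\bigr]\, dt + \mathbb{E}\bigl(h_y(y(0)), \tilde y(0) - y(0)\bigr)_H,
\end{equation*}
and substituting the duality identity cancels the $l_y$- and $l_z$-terms, leaving $J(\tilde u) - J(u) \ge \mathbb{E}\int_0^T (\mathcal{H}_u(t, y, z, u, k), \tilde u - u)_U\, dt$. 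Finally, since $U$ is convex and $\mathcal{H}(t, y(t), z(t), \cdot, k(t))$ is convex and G\^ateaux differentiable, the pointwise minimum condition \eqref{eq:5.119} is equivalent to the variational inequality $(\mathcal{H}_u(t, y(t), z(t), u(t), k(t)), v - u(t))_U \ge 0$ for every $v \in U$; taking $v = \tilde u(t)$ closes the estimate to $J(\tilde u) - J(u) \ge 0$.

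The only step that is not purely mechanical is the It\^o formula in the first step, since $k$ solves a forward SEE in $V$, $\tilde y - y$ solves a backward SEE in $V$ with drift in $V^*$, and the product $(k, \tilde y - y)_H$ mixes the two directions of time. The infinite-dimensional It\^o formula for dual pairs in a Gelfand triple, underpinning the SEE/BSEE theory used in \cite{meng2013stochastic} and \cite{bensoussan1983stochastic}, should apply directly; once the duality identity is in hand, the remainder of the argument is pure convexity manipulation.
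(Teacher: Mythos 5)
Your proposal is correct and is the standard duality-plus-convexity verification argument: the It\^o computation on $(k(t),\tilde y(t)-y(t))_H$ with $\tilde y(T)-y(T)=0$ and $k(0)=-h_y(y(0))$ yields exactly the duality identity you state, and combining it with the subgradient inequalities for $l$ and $h$ and the variational inequality $({\cal H}_u(t,y(t),z(t),u(t),k(t)),v-u(t))_U\geq 0$ (which follows from \eqref{eq:5.119} by convexity of ${\cal H}$ in $u$ and convexity of $U$) gives $J(\tilde u)-J(u)\geq 0$. The paper itself does not prove Theorem \ref{thm:4.1} but imports it from \cite{meng2013stochastic}, where the proof proceeds along essentially the same lines, so there is nothing to correct beyond the routine justification (via the $M^2_{\mathscr F}$ regularity of $k$, $\delta y$, $\delta z$ and localization) that the stochastic integrals appearing in the It\^o expansion have zero expectation.
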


Theorems \ref{thm:4.2} and \ref{thm:4.1} are called necessary maximum principle and sufficient
maximum principle (or verification theorem) for optimality of the control system
(\ref{eq:3.1})-(\ref{eq:2.2}), which were obtained by \cite{meng2013stochastic}.

\section{Main Results}

In this section, we first prove that the stochastic Hamiltonian
system admits a unique solution. Using the maximum principle, then we show
that Problem \ref{pro:2.1} has a unique optimal control and thus a unique optimal control
pair. This is the main result of our paper.

First of all, we restate the stochastic Hamiltonian system associated with our optimal control problem:
\begin{eqnarray}\label{eq:4.4}
\left\{
\begin{aligned}
dk(t)=& -\big[A^*(t)k(t)+l_y(t,y(t),z(t),u(t))\big]dt \\
& -\big[B^*(t)k(t)+l_z(t,y(t),z(t),u(t))\big]dW(t),\\
dy(t)=& \ \big [A(t)y(t)+B(t)z(t)+D(t)\gamma( D^*(t)k(t))+G(t)\big ]dt+z(t)dW(t),\\
k(0)=&-h_y(y(0)), \quad y(T)=\xi,~~~~~~~t\in [0, T].
\end{aligned}
\right.
\end{eqnarray}
Indeed, the stochastic Hamiltonian system is a forward-backward stochastic partial differential
equation (FBSPDE) or a forward-backward stochastic evolution equation (FBSEE), which is fully
coupled.

In what follows, we denote by
\begin{eqnarray*}
\mathbb{M}^2[0,T] \triangleq M_{\mathscr{F}}^2(0,T;V)\times
M_{\mathscr{F}}^2(0,T; V) \times M_{\mathscr{F}}^2(0,T; H) .
\end{eqnarray*}
Clearly, $\mathbb{M}^2[0,T]$ is a Banach space equipped with the following norm:
\begin{eqnarray*}
\| \big ( k (\cdot), y (\cdot), z (\cdot) \big ) \|_{\mathbb{M}^2[0,T]} \triangleq
\bigg \{ \|k (\cdot)\|^2_{M_{\mathscr{F}}^2(0,T;V)} + \|y (\cdot)\|^2_{M_{\mathscr{F}}^2(0,T;V)}
+ \|z (\cdot)\|^2_{M_{\mathscr{F}}^2(0,T;H)} \bigg \}^{\frac{1}{2}} .
\end{eqnarray*}

The following theorem confirms the existence and uniqueness of a solution to the forward-backward system \eqref{eq:4.4}.
This result will play a vital role in proving the existence of the optimal control.

\begin{theorem}\label{thm:4.3}
Given that {\bf Assumption  A} is satisfied. There exists a unique solution
$(k(\cdot), y(\cdot),z(\cdot))\in \mathbb {M}^2[0, T]$ of the Hamiltonian system \eqref{eq:4.4}.
\end{theorem}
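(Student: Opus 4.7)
The plan is to prove Theorem~\ref{thm:4.3} by the method of continuation, the classical device for fully coupled FBSDE-type systems endowed with monotonicity structure. For $\alpha\in[0,1]$ and data $(\phi,\psi,\eta_1,\eta_2)$ in the appropriate $M^2/L^2$-spaces, I would introduce the parameterized family
\begin{align*}
dk^\alpha(t)=&-\bigl[A^*(t)k^\alpha+\alpha l_y(t,y^\alpha,z^\alpha,\gamma(D^*k^\alpha))-(1-\alpha)y^\alpha+\phi(t)\bigr]dt\\
&-\bigl[B^*(t)k^\alpha+\alpha l_z(t,y^\alpha,z^\alpha,\gamma(D^*k^\alpha))\bigr]dW(t),\\
dy^\alpha(t)=&\bigl[A(t)y^\alpha+B(t)z^\alpha+\alpha D(t)\gamma(D^*(t)k^\alpha)+(1-\alpha)k^\alpha+G(t)+\psi(t)\bigr]dt+z^\alpha dW(t),\\
k^\alpha(0)=&-\alpha h_y(y^\alpha(0))-(1-\alpha)y^\alpha(0)+\eta_1,\qquad y^\alpha(T)=\xi+\eta_2.
\end{align*}
At $\alpha=1$ this coincides with \eqref{eq:4.4}; at $\alpha=0$ it is a linear, monotonically coupled FBSEE whose well-posedness is accessible through a Lax--Milgram-type variational argument or a Banach fixed-point iteration built on the linear SEE/BSEE theories of \cite{hu1991adapted} and \cite{bensoussan1983stochastic}.

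The decisive step is a uniform-in-$\alpha$ a priori estimate. Given two solutions $(k^i,y^i,z^i)$ ($i=1,2$) at the same $\alpha$ with data $(\phi^i,\psi^i,\eta_1^i,\eta_2^i)$, set $\hat k=k^1-k^2$, $\hat y=y^1-y^2$, $\hat z=z^1-z^2$, and apply It\^o's formula to $(\hat k(t),\hat y(t))_H$ on $[0,T]$. The adjoint dualities $\langle A^*\hat k,\hat y\rangle=\langle\hat k,A\hat y\rangle$ and $(B^*\hat k,\hat z)_H=(\hat k,B\hat z)_H$ cancel the operator contributions exactly, leaving after expectation
\begin{align*}
&\alpha\,\mathbb{E}\!\!\int_0^T\!\!\!\bigl[(\Delta l_y,\hat y)_H+(\Delta l_z,\hat z)_H-(\hat k,D\Delta\gamma)_H\bigr]dt+(1-\alpha)\,\mathbb{E}\!\!\int_0^T\!\!\!\bigl[\|\hat y\|_H^2+\|\hat k\|_H^2\bigr]dt\\
&\qquad+\alpha\,\mathbb{E}\bigl(h_y(y^1(0))-h_y(y^2(0)),\hat y(0)\bigr)_H+(1-\alpha)\,\mathbb{E}\|\hat y(0)\|_H^2=\text{data-difference terms},
\end{align*}
where $\Delta l_y,\Delta l_z,\Delta\gamma$ denote the obvious differences. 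The monotonicity in (A.4) bounds the first bracket below by $C(\|\hat y\|_V^2+\|\hat z\|_H^2)$ modulo a $\gamma(D^*k^1)-\gamma(D^*k^2)$ perturbation; the first inequality of (A.5) gives $-(\hat k,D\Delta\gamma)_H\ge 0$; and the $h_y$-convexity in (A.4) gives the boundary lower bound $C\|\hat y(0)\|_V^2$. Absorbing the $u$-perturbation via the Lipschitz control on $D\gamma(D^*\cdot)$ from (A.5) with a small-$\eps$ Young inequality, and extracting $\|\hat k\|_{M_{\mathscr{F}}^2(0,T;V)}$ from the $A^*$-term through the coercivity (A.2), one arrives at
\[
\|(\hat k,\hat y,\hat z)\|_{\mathbb{M}^2[0,T]}^2\le K\bigl\{\|\phi^1-\phi^2\|^2+\|\psi^1-\psi^2\|^2+\mathbb{E}\|\eta_1^1-\eta_1^2\|_V^2+\mathbb{E}\|\eta_2^1-\eta_2^2\|_H^2\bigr\},
\]
with $K$ independent of $\alpha$; uniqueness at every $\alpha$ is then immediate.

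With the a priori estimate in hand, whenever the system is solvable at some $\alpha_0\in[0,1)$ and $\delta>0$ is small, I would define the map $\mathcal{T}_{\alpha_0+\delta}:\mathbb{M}^2[0,T]\to\mathbb{M}^2[0,T]$ that sends a guess $(\tilde k,\tilde y,\tilde z)$ to the unique $\alpha_0$-solution of the system whose inhomogeneities are augmented by the $\delta$-perturbations evaluated at the guess. The a priori estimate applied to two guesses shows $\mathcal{T}_{\alpha_0+\delta}$ is a contraction whenever $\delta\le\delta_0$, for some $\delta_0$ universal in $\alpha_0$; its fixed point is a solution at $\alpha_0+\delta$. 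Iterating $\lceil 1/\delta_0\rceil$ times from the solvable endpoint $\alpha=0$ reaches $\alpha=1$ and delivers the desired solution of \eqref{eq:4.4}.

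The hard part will be the uniform a priori estimate, and within it the non-monotone coupling produced by plugging $u=\gamma(D^*k)$ into $l_y,l_z$: because (A.4) is stated only for fixed $u$, one must split
\[
l_y(y^1,z^1,\gamma(D^*k^1))-l_y(y^2,z^2,\gamma(D^*k^2))=\bigl[l_y(y^1,z^1,\gamma(D^*k^1))-l_y(y^2,z^2,\gamma(D^*k^1))\bigr]+\bigl[l_y(y^2,z^2,\gamma(D^*k^1))-l_y(y^2,z^2,\gamma(D^*k^2))\bigr],
\]
handle the first bracket by (A.4), and absorb the second using only the Lipschitz bound on $D\gamma(D^*\cdot)$ that (A.5) provides (not any bound on $\gamma$ itself), combined with Cauchy--Schwarz and the $V$-coercivity extracted from the backward $\hat k$-equation via (A.2). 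It is precisely this interplay between (A.4) and (A.5) that makes the structural requirements on the Hamiltonian minimizer indispensable.
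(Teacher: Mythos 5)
Your overall strategy is the same as the paper's: a continuation in a parameter, a key a priori estimate obtained by applying It\^o's formula to $(\hat k(t),\hat y(t))_H$ and invoking the monotonicity conditions (A.4)--(A.5) together with the duality of $A,B$ with $A^*,B^*$, and then a contraction argument that advances the parameter by a step $\delta_0$ independent of the starting point (compare Lemma \ref{lem:4.3}). There are, however, two concrete gaps. The first is the base case. Because you insert the cross terms $-(1-\alpha)y^\alpha$ and $+(1-\alpha)k^\alpha$ and modify the initial condition, your $\alpha=0$ system is still a fully coupled linear FBSEE, and you dismiss its solvability with a one-line appeal to ``a Lax--Milgram-type variational argument or a Banach fixed-point iteration.'' Neither is automatic: a Picard iteration for a coupled forward--backward system on a fixed horizon $[0,T]$ does not contract without smallness, and a variational well-posedness proof for the coupled linear system is itself a nontrivial task of essentially the same nature as the theorem. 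The paper's family \eqref{eq:11} interpolates only the coupling terms ($\rho\,l_y+(1-\rho)Cy$, $\rho\,l_z+(1-\rho)Cz$, $\rho\,D\gamma(D^*k)$) and leaves $k(0)=-h_y(y(0))$ alone, so that at $\rho=0$ the backward equation for $(y,z)$ contains no $k$ whatsoever: one solves it by the BSEE theory of \cite{hu1991adapted} or \cite{peng1992stochastic} and then solves the forward equation for $k$ by \cite{bensoussan1983stochastic}. With your homotopy you must actually prove the $\alpha=0$ case; with a decoupling homotopy you get it for free.

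The second gap is the cross term you isolate at the end, which cannot be absorbed under the stated assumptions. You substitute $u=\gamma(D^*k)$ into $l_y,l_z$ and propose to control $l_y(y^2,z^2,\gamma(D^*k^1))-l_y(y^2,z^2,\gamma(D^*k^2))$ using ``only the Lipschitz bound on $D\gamma(D^*\cdot)$ that (A.5) provides.'' But (A.5) bounds the increment of the composition $D(t)\gamma(D^*(t)\cdot)$ in $H$; it gives no control of $\gamma(D^*k^1)-\gamma(D^*k^2)$ in $U$, and (A.3) gives only linear growth of $l_y,l_z$ in $u$, not Lipschitz continuity in $u$. There is therefore no chain of inequalities leading from your second bracket to $\|k^1-k^2\|_V$, and the a priori estimate does not close. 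The paper's computation sidesteps this entirely: in its It\^o identity and in the uniqueness argument the third argument of $l_y,l_z$ is the \emph{same} process $u(t)$ for both solutions, so (A.4) applies directly, and the only feedback term is $D\gamma(D^*k)$ in the forward drift, whose contribution is exactly what the sign and Lipschitz conditions of (A.5) are designed to handle. If you insist on closing the loop inside $l_y,l_z$, you need additional hypotheses (Lipschitz continuity of $l_y,l_z$ in $u$ together with a Lipschitz bound on $\gamma\circ D^*$ itself in $U$) that are not part of Assumption A; otherwise, follow the paper and keep $u(\cdot)$ fixed in those terms.
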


Before proving Theorem \ref{thm:4.3}, we state and prove the main result of the paper,
i.e. the existence of an optimal control for the BSPDE control system \eqref{eq:3.1}-\eqref{eq:2.2}.
Once we have proved Theorem \ref{thm:4.3}, the main result is an immediate consequence of
Theorem \ref{thm:4.1}. The proof of Theorem \ref{thm:4.3} will be postponed
after we present the following main result.

\begin{theorem} \label{thm:4.4}
Given that {\bf Assumption A} is satisfied. There exists a unique optimal control
$\gamma(D^*(\cdot) k(\cdot))$ and thus a unique optimal control pair $(\gamma(
D^*(\cdot)  k(\cdot));   y(\cdot), z(\cdot))$ of Problem \ref{pro:2.1}.
\end{theorem}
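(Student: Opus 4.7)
The plan is to derive Theorem~\ref{thm:4.4} as an immediate consequence of Theorem~\ref{thm:4.3} combined with the sufficient and necessary maximum principles (Theorems~\ref{thm:4.1} and~\ref{thm:4.2}): the solution of the Hamiltonian system will directly furnish an optimal control via the verification theorem, while any other optimal control will be forced by the necessary conditions to coincide with the same $\gamma(D^*k)$ expression.

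For the existence part, I would let $(k(\cdot),y(\cdot),z(\cdot))\in\mathbb{M}^2[0,T]$ denote the unique solution of \eqref{eq:4.4} supplied by Theorem~\ref{thm:4.3} and set $\bar u(t):=\gamma(D^*(t)k(t))$. Three things need to be checked. First, $\bar u\in{\cal A}$: Assumption (A.5) with $k_2=0$ together with the uniform boundedness of $D$ in (A.1) yields a linear growth bound on $D(t)\bar u(t)$ in terms of $\|k(t)\|_V$, and since $k\in M_{\mathscr{F}}^2(0,T;V)$ one recovers $\bar u\in M_{\mathscr{F}}^2(0,T;U)$. Second, by construction $(y,z)$ is the state process corresponding to $\bar u$ (the drift of the forward equation in \eqref{eq:4.4} matches that of \eqref{eq:3.1} evaluated at $u=\bar u$) and $k$ solves the adjoint equation \eqref{eq:f333} associated with $(\bar u;y,z)$. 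Third, the hypotheses of Theorem~\ref{thm:4.1} hold: ${\cal H}(t,\cdot,\cdot,\cdot,k(t))$ is convex in $(y,z,u)$ since $(B(t)z+D(t)u,k(t))_H$ is affine in $(z,u)$ and $l$ is convex by (A.3); $h$ is convex in $y$ by (A.3); and the pointwise minimization condition \eqref{eq:5.119} at $\bar u(t)$ is exactly the defining identity \eqref{eq:4.2} for $\gamma$. Theorem~\ref{thm:4.1} then certifies that $\bar u(\cdot)$ is optimal.

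For uniqueness, I would take any other optimal control $\tilde u(\cdot)$ with state $(\tilde y,\tilde z)$ and adjoint $\tilde k$ and apply Theorem~\ref{thm:4.2}: the variational inequality
\begin{eqnarray*}
\big({\cal H}_u(t,\tilde y(t),\tilde z(t),\tilde u(t),\tilde k(t)),\,u-\tilde u(t)\big)_U\geq 0,\quad\forall u\in U,
\end{eqnarray*}
combined with convexity of ${\cal H}$ in $u$ forces $\tilde u(t)$ to be a pointwise minimizer of ${\cal H}(t,\tilde y(t),\tilde z(t),\cdot,\tilde k(t))$ on $U$, as is $\gamma(D^*(t)\tilde k(t))$ by (A.5). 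Identifying the two minimizers yields $\tilde u(t)=\gamma(D^*(t)\tilde k(t))$, so $(\tilde k,\tilde y,\tilde z)$ also solves the Hamiltonian system \eqref{eq:4.4}; the uniqueness half of Theorem~\ref{thm:4.3} then delivers $(\tilde k,\tilde y,\tilde z)=(k,y,z)$ and consequently $\tilde u=\bar u$.

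The main difficulty I foresee is the minimizer-identification step in the uniqueness argument: (A.5) only asserts that $\gamma(D^*k)$ \emph{attains} the minimum of ${\cal H}$ in $u$, not that it is the unique minimizer, so to equate $\tilde u(t)$ with $\gamma(D^*(t)\tilde k(t))$ one must invoke a strict-convexity-in-$u$ property of $l$ (available in the LQ examples of Section~4, and at worst implicit in the monotonicity framework of (A.3)--(A.4)), or alternatively exploit the monotonicity of $k\mapsto D\gamma(D^*k)$ in (A.5) to show that the difference of any two optimal triples vanishes after applying It\^o's formula on $(k_1-k_2,y_1-y_2)_H$. The admissibility verification for $\bar u$ is the other careful---but routine---point, since (A.5) only directly controls the composition $D\gamma(D^*\cdot)$ rather than $\gamma$ in isolation.
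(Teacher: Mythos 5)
Your proposal follows the same route as the paper's (very short) proof for the existence half: take the unique solution $(k(\cdot),y(\cdot),z(\cdot))$ of the Hamiltonian system \eqref{eq:4.4} from Theorem~\ref{thm:4.3}, set $\bar u=\gamma(D^*k)$, observe that $(y,z)$ is the corresponding state and $k$ the corresponding adjoint, check the convexity hypotheses and the minimization condition \eqref{eq:5.119}, and invoke the verification theorem (Theorem~\ref{thm:4.1}). That part is correct and is essentially all the paper writes down.

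Where you genuinely diverge is in taking the uniqueness claim and the admissibility of $\bar u$ seriously. The paper disposes of both in one sentence (``By the definition of the map $\gamma$, we know that $(\gamma(D^*k);y,z)$ is an admissible pair'' and ``we conclude that $\gamma(D^*k)$ is the unique optimal control''), offering no argument for either. Your uniqueness scheme --- apply the necessary maximum principle (Theorem~\ref{thm:4.2}) to any other optimal $\tilde u$, use convexity of ${\cal H}$ in $u$ to identify $\tilde u(t)$ as a pointwise minimizer, match it with $\gamma(D^*\tilde k(t))$, and then fall back on the uniqueness half of Theorem~\ref{thm:4.3} --- is the natural way to try to close this, and the obstruction you flag is real: Assumption \textbf{(A.5)} only says $\gamma(D^*k)$ \emph{attains} the minimum, and \textbf{(A.4)} imposes strict monotonicity only in $(y,z)$, not in $u$, so without strict convexity of $l$ in $u$ two distinct minimizers could coexist and the identification $\tilde u=\gamma(D^*\tilde k)$ fails in general. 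The same issue afflicts the paper's own assertion; it is harmless in the LQ examples of Section~4 (where $N>0$ makes ${\cal H}$ strictly convex in $u$) but is not covered by \textbf{Assumption A} as stated. Likewise your point about admissibility is well taken: \textbf{(A.5)} bounds $\|D\gamma(D^*k_1)-D\gamma(D^*k_2)\|_H$ rather than $\|\gamma(D^*k)\|_U$, so an additional growth hypothesis on $\gamma$ itself (or on $D$ having a bounded left inverse) is implicitly being used to place $\gamma(D^*k(\cdot))$ in $M^2_{\mathscr F}(0,T;U)$. In short, your existence argument reproduces the paper's; your uniqueness discussion is more careful than the paper's and correctly isolates the two points the paper leaves unjustified.
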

\begin{proof}
From Theorem \ref{thm:4.3}, the Hamiltonian system \eqref{eq:4.4} admits a unique solution.
Let $( k(\cdot),  y(\cdot),  z(\cdot))$ be this unique solution. By the definition of the map $\gamma$,
we know that $(\gamma(D^*(\cdot)  k(\cdot));  y(\cdot), z(\cdot))$ is an admissible
pair and $k(\cdot)$ is the corresponding adjoint process. By {\bf Assumption A}, we have
\begin{eqnarray}\label{eq:5.119}
{\cal H} (t, {y}(t),  z(t), \gamma( D^*(t)  k(t)),  {k}(t))
= \min_{u\in U} {\cal H} (t, {y}(t),  z(t), u,{k}(t)).
\end{eqnarray}
Using Theorem \ref{thm:4.1}, we conclude that $\gamma(D^*(\cdot)  k(\cdot))$ is the unique optimal control
and $(\gamma(D^*(\cdot)  k(\cdot));  y(\cdot), z(\cdot))$ is the unique optimal control pair of Problem \ref{pro:2.1}.
\end{proof}

To prove Theorem \ref{thm:4.3}, we consider the following auxiliary FBSEE:
\begin{eqnarray}\label{eq:11}
\left\{
\begin{aligned}
dk(t) =& - \big [ A^*(t)k(t)+\rho l_y(t,y(t),z(t),u(t))+(1-\rho) Cy(t) +b_0(t)\big ]dt \\
& - \big [ B^*(t)k(t)+\rho l_z(t,y(t),z(t),u(t)) +(1-\rho) Cz(t)+g_0(t) \big ]dW (t), \\
dy (t) =& \ \big [ A(t)y(t)+ B(t)z(t)+\rho D(t)\gamma( D^*(t)k(t)) +G(t)+f_0(t) \big ]dt +z(t)dW (t),\\
k(0) =& -h_y(y(0)) , \quad y(T)=\xi,
\end{aligned}
\right.
\end{eqnarray}
where $b_0 (\cdot)\in M^2_{\cal F}(0,T;V), g_0 (\cdot), f_0 (\cdot) \in M^2_{\cal F}(0,T;H)$ and $\rho \in [0, 1]$.
The next lemma discusses the solvability of FBSEE \eqref{eq:11}.

\begin{lemma} \label{lem:4.3}
Given that {\bf Assumption A} is satisfied. Suppose that
for any $b_0 (\cdot)\in M^2_{\cal F}(0,T;V)$ and $g_0 (\cdot), f_0 (\cdot) \in M^2_{\cal F}(0,T;H)$,
FBSEE \eqref{eq:11} associated with some $\rho=\rho_0$
admits a unique solution $(k(\cdot), y(\cdot),z(\cdot))\in \mathbb {M}^2[0, T]$.
Then there exists $\delta_0\in (0,1]$ such that for any $\rho\in [\rho_0, \rho_0+\delta_0]$,
FBSEE \eqref{eq:11} admits a solution $(k(\cdot), y(\cdot),z(\cdot))\in \mathbb {M}^2[0,T]$.
\end{lemma}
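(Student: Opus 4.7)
I would argue by the classical continuation method for fully--coupled FBSDEs, adapted to the Gelfand--triple setting via the continuous-dependence estimates for SEEs and BSEEs from \cite{meng2013stochastic}. Write $\rho=\rho_0+\delta$ with $\delta\in[0,\delta_0]$ to be fixed. For each $(\tilde k,\tilde y,\tilde z)\in\mathbb{M}^2[0,T]$, define $(k,y,z):=\Phi_\delta(\tilde k,\tilde y,\tilde z)$ to be the unique solution of \eqref{eq:11} at the parameter value $\rho_0$ with inhomogeneous terms augmented by
\begin{equation*}
\begin{aligned}
b_0&\longmapsto b_0+\delta\big[l_y(t,\tilde y,\tilde z,\gamma(D^*\tilde k))-C\tilde y\big],\\
g_0&\longmapsto g_0+\delta\big[l_z(t,\tilde y,\tilde z,\gamma(D^*\tilde k))-C\tilde z\big],\\
f_0&\longmapsto f_0+\delta D\gamma(D^*\tilde k).
\end{aligned}
\end{equation*}
The growth bounds in (A.3) and the Lipschitz bound in (A.5) keep these augmented data in the required $M^2$-spaces, so $\Phi_\delta$ is well defined by the $\rho_0$-solvability hypothesis. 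A direct substitution shows that a fixed point of $\Phi_\delta$ satisfies \eqref{eq:11} at $\rho=\rho_0+\delta$: the $\delta$-perturbations collapse into the $\rho_0$ coefficients, producing exactly the $\rho$-coefficients.

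\medskip

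The core of the argument is to show that $\Phi_\delta$ is a contraction on $\mathbb{M}^2[0,T]$ with constant independent of $\rho_0$. For two inputs $(\tilde k^i,\tilde y^i,\tilde z^i)$ with images $(k^i,y^i,z^i)$, let $\hat\phi:=\phi^1-\phi^2$ and apply the Gelfand--triple It\^o formula to the pairing $\la\hat k(t),\hat y(t)\ra$ on $[0,T]$. The $\la A^*\hat k,\hat y\ra$--$\la\hat k,A\hat y\ra$ terms and the $B^*$--$B$ It\^o cross term cancel; using the boundary conditions $\hat y(T)=0$, $\hat k(0)=-(h_y(y^1(0))-h_y(y^2(0)))$ and taking expectation produces
\begin{align*}
&\mbox{E}\big(h_y(y^1(0))-h_y(y^2(0)),\hat y(0)\big)_H\\
&\qquad+\mbox{E}\!\int_0^T\!\Big[\rho_0\big((\hat l_y,\hat y)_H+(\hat l_z,\hat z)_H\big)+(1-\rho_0)C\big(\|\hat y\|_H^2+\|\hat z\|_H^2\big)-\rho_0(D\hat\gamma,\hat k)_H\Big]dt=\delta R,
\end{align*}
where $R$ collects the perturbation pairings and satisfies $|R|\le C\|(\hat{\tilde k},\hat{\tilde y},\hat{\tilde z})\|_{\mathbb{M}^2}\|(\hat k,\hat y,\hat z)\|_{\mathbb{M}^2}$ by (A.3) and (A.5). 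Monotonicity (A.4) gives $CE\|\hat y(0)\|_V^2$ as a lower bound for the boundary term and $C\rho_0(\|\hat y\|_V^2+\|\hat z\|_H^2)$ for the $(\hat l_y,\hat y)+(\hat l_z,\hat z)$ pair, modulo a cross term from the control mismatch $u^i=\gamma(D^*k^i)$. That cross term is dealt with through the splitting
\begin{equation*}
l_y(y^1,z^1,u^1)-l_y(y^2,z^2,u^2)=\big[l_y(y^1,z^1,u^1)-l_y(y^2,z^2,u^1)\big]+\big[l_y(y^2,z^2,u^1)-l_y(y^2,z^2,u^2)\big],
\end{equation*}
with (A.4) applied to the first bracket and the growth/Lipschitz data of (A.3), (A.5) absorbing the second into a $\|\hat k\|_V$-term via Young's inequality; (A.5) monotonicity also yields $-(D\hat\gamma,\hat k)_H\ge 0$.

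\medskip

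To close the estimate I would regard the $\hat k$-equation as a linear BSEE with inhomogeneous data depending on $\hat y,\hat z,\hat{\tilde k},\hat{\tilde y},\hat{\tilde z}$ and invoke the continuous-dependence result for BSEEs from \cite{meng2013stochastic} to obtain
\begin{equation*}
\|\hat k\|_{M^2_{\mathscr F}(0,T;V)}^2\le K\big(\|\hat y\|_{M^2_{\mathscr F}(0,T;V)}^2+\|\hat z\|_{M^2_{\mathscr F}(0,T;H)}^2+\delta^2\|(\hat{\tilde k},\hat{\tilde y},\hat{\tilde z})\|_{\mathbb{M}^2}^2\big).
\end{equation*}
Combining this with the previous monotonicity inequality through a careful tuning of Young weights yields
\begin{equation*}
\|\Phi_\delta(\tilde k^1,\tilde y^1,\tilde z^1)-\Phi_\delta(\tilde k^2,\tilde y^2,\tilde z^2)\|_{\mathbb{M}^2}\le K'\delta\,\|(\hat{\tilde k},\hat{\tilde y},\hat{\tilde z})\|_{\mathbb{M}^2},
\end{equation*}
with $K'$ depending only on $T$ and the constants in Assumption A. Setting $\delta_0:=\min\{1,(2K')^{-1}\}$ makes $\Phi_\delta$ a strict contraction for every $\delta\in[0,\delta_0]$, so Banach's fixed-point theorem delivers the sought solution. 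The main obstacle will be keeping $K'$ genuinely independent of $\rho_0$: the $\rho_0$-weighted (A.4) monotonicity is the only source of $V$-norm control for $(\hat y,\hat z)$, so when $\rho_0$ is small one must rely on the coercivity (A.2) of $A$ inside the BSEE/SEE a priori estimates to upgrade the $H$-norm ballast from the $(1-\rho_0)C$ terms to the $V$-norm control required to close the contraction uniformly in $\rho_0$.
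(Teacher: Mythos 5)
Your proposal follows essentially the same route as the paper: the same frozen-input fixed-point map that solves the $\rho_0$-system with the $(\rho-\rho_0)$-perturbation terms evaluated at the given input, the same three ingredients (It\^{o} duality applied to $(\hat k,\hat y)_H$ combined with the monotonicity in (A.4)--(A.5), plus the continuous-dependence estimates for SEEs and BSEEs from \cite{meng2013stochastic} and a Young-inequality closure), and the same choice $\delta_0=(2K)^{-1}\wedge 1$ for Banach's theorem. One caveat: in the paper the argument $u(t)$ inside $l_y,l_z$ is a fixed exogenous process (the same for both solutions), so (A.4) applies directly and your control-mismatch splitting is unnecessary; as written, absorbing the bracket $l_y(y^2,z^2,u^1)-l_y(y^2,z^2,u^2)$ would require Lipschitz continuity of $l_y,l_z$ in $u$, which (A.3) does not supply. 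Your closing remark about uniformity in $\rho_0$ is well taken --- the $(1-\rho_0)C$ ballast terms only yield $H$-norm monotonicity, and the paper's claim of a $V$-norm bound there is a point it glosses over.
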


\begin{proof}
For any $\rho \in [0,1]$ other than $\rho_0$, we can rewrite FBSEE (\ref{eq:11}) as
\begin{eqnarray}\label{eq:13}
\left\{
\begin{aligned}
dk(t) =& - \big [A^*(t)k(t)+\rho_0 l_y(t,y(t),z(t),u(t))+ (1-\rho_0)Cy(t) \\
& +(\rho-\rho_0) l_y(t,y(t),z(t),u(t)) -(\rho-\rho_0)Cy(t) +b_0(t) \big ]dt \\
& - \big [B^*(t)k(t)+\rho_0 l_z(t,y(t),z(t),u(t)) +(1-\rho_0) Cz(t) \\
&+(\rho-\rho_0) l_z(t,y(t),z(t),u(t)) -(\rho-\rho_0) Cz(t) + g_0(t) \big ]dW (t), \\
dy (t) =& \ \big [A(t)y(t)+
B(t)z(t)+\rho_0 D(t)\gamma( D^*(t)k(t)) \\
& +(\rho-\rho_0) D(t)\gamma( D^*(t)k(t))+G(t) +f_0(t)\big ]dt+z(t)dW (t),\\
k(0)=& \ -h_y(y(0)), \quad y(T)=\xi .
\end{aligned}
\right.
\end{eqnarray}
Thus for any $\Lambda^\prime (\cdot)=(  k^\prime (\cdot),  y^\prime (\cdot), z^\prime (\cdot))
\in \mathbb {M}^2[0, T]$, the following FBSEE
\begin{eqnarray}\label{eq:12}
\left\{
\begin{aligned}
dk(t) =& - \big [A^*(t)k(t)+\rho_0 l_y(t,y(t),z(t),u(t))+(1-\rho_0)Cy(t) \\
& +(\rho-\rho_0) l_y(t,y^\prime (t),z^\prime (t),u(t)) -(\rho-\rho_0)Cy^\prime (t) +b_0(t) \big ]dt \\
& - \big [B^*(t)k(t)+\rho_0 l_z(t,y(t),z(t),u(t)) +(1-\rho_0) Cz(t) \\
&+(\rho-\rho_0) l_z(t,y^\prime (t),z^\prime (t),u(t))
-(\rho-\rho_0) Cz^\prime (t) +g_0(t) \big ]dW (t), \\
dy (t) =& \ \big [A(t)y(t)+
B(t)z(t)+\rho_0 D(t)\gamma( D^*(t)k(t)) \\
& +(\rho-\rho_0) D(t)\gamma( D^*(t)k^\prime (t)) +G(t)+f_0(t)\big ]dt+z(t)dW (t),\\
k(0)=& \ -h_y(y(0)), \quad y(T)=\xi ,
\end{aligned}
\right.
\end{eqnarray}
has a unique solution $\Lambda (\cdot)=(k(\cdot),y(\cdot), z(\cdot))\in \mathbb {M}^2[0, T]$.
Hence, by FBSEE (\ref{eq:12}), we can define a mapping ${\cal I}: \mathbb {M}^2[0, T] \rightarrow
\mathbb {M}^2[0, T]$ such that ${\cal I} ( \Lambda^\prime (\cdot)) = \Lambda (\cdot)$.

Next we claim that ${\cal I}$ is a contraction mapping. In fact,
for any $\Lambda^\prime_i (\cdot) =(  k^\prime_i(\cdot),  y^\prime_i(\cdot), z^\prime_i(\cdot))
\in  \mathbb {M}^2[0, T]$, $i=1, 2$, we can define $\Lambda_i (\cdot) =
(k_i(\cdot),y_i(\cdot), z_i(\cdot)))= {\cal I} ( \Lambda^\prime_i (\cdot))$.
On the one hand, from {\bf Assumption A} and the continuous dependence theorem
for SEEs (see Lemma 2.3 in \cite{meng2013stochastic}), we have
\begin{eqnarray}\label{eq:3.11}
&& {\mbox E} \bigg [ \sup_{0\leq t\leq T}\|{k_1} (t)-k_2(t)\|^2_H \bigg ]
+ {\mbox E} \bigg [ \int_{0}^T\| {k_1} (t)-k_2(t)\|_V^2dt \bigg ] \nonumber \\
&& \leq K \bigg \{ {\mbox E} \big [ ||y_1(0)- y_2(0)||^2_H \big ]
+ {\mbox E} \bigg [ \int_{0}^T\| {y_1} (t)-y_2(t)\|_V^2dt \bigg ] \nonumber \\
&& \quad + {\mbox E} \bigg [ \int_{0}^T\| {z_1} (t)-z_2(t)\|_H^2dt \bigg ]
+ |\rho-\rho_0| \cdot || \Lambda^\prime_1(\cdot)- \Lambda^\prime_2(\cdot) ||^2_{\mathbb {M}^2[0, T]} \bigg \} \ .
\end{eqnarray}
On the other hand, from {\bf Assumption A} and the continuous dependence theorem
for BSEEs (see Lemma 2.5 in \cite{meng2013stochastic}), we have
\begin{eqnarray}\label{eq:4.10}
&& {\mbox E} \bigg [ \sup_{0\leqslant t\leqslant T}\|{y_1}(t)-y_2(t)\|^2_H \bigg ]
+ {\mbox E} \bigg [ \int_{0}^T\| {y_1} (t)-y_2(t)\|_V^2dt \bigg ]
+ {\mbox E} \bigg [ \int_{0}^T\| {z_1} (t)-z_2(t)\|_H^2dt \bigg ] \nonumber \\
&& \leq K \bigg \{ {\mbox E} \bigg [ \int_{0}^T\| {k_1} (t)-k_2(t)\|_V^2dt \bigg ]
+ |\rho-\rho_0| \cdot || \Lambda^\prime_1(\cdot)- \Lambda^\prime_2(\cdot)||^2_{\mathbb {M}^2[0, T]} \bigg \} .
\end{eqnarray}
Furthermore, applying It\^{o}'s formula to $\big ( k_1 (t)-k_2(t),y_1(t)-y_2(t) \big )_H$ (please refer to
\cite{krylov1981stochastic} for a version of It\^{o}'s formula in Hilbert spaces) and
noting the duality relations between $A$, $B$ and $A^*$, $B^*$, we deduce
\begin{eqnarray}\label{eq36}
&& {\mbox E} \big [ \big ( h_y(y_1 (0))-h_y({y_2} (0), y_1(0)-y_2(0) \big )_H \big ] \nonumber \\
&& = -\rho_0 {\mbox E} \bigg [ \int_0^T \big ( l_y(t,y_1(t),z_1(t),u(t))
-l_y(t,y_2(t),z_2(t),u(t)),y_1(t)-y_2(t) \big )_H dt \bigg ] \nonumber \\
&& \quad -\rho_0 {\mbox E} \bigg [ \int_0^T \big ( l_z(t,y_1(t), z_1(t),u(t))-l_z(t,y_2(t),
z_2(t),u(t)),z_1(t)-z_2(t) \big )_H dt \bigg ] \nonumber \\
&& \quad -(\rho-\rho_0) {\mbox E} \bigg [ \int_0^T \big ( l_y(t,
y_1^\prime(t), z_1^\prime(t),u(t))-l_y(t, y_2^\prime(t),
z_2^\prime (t),u(t)), y_1(t)-y_2(t) \big )_H dt \bigg ] \nonumber \\
&& \quad -(\rho-\rho_0) {\mbox E} \bigg [ \int_0^T \big ( l_z(t, y_1^\prime
(t), z_1^\prime(t),u(t))-l_z(t,  y_2^\prime(t),  z_2^\prime(t),u(t)), z_1(t)-z_2(t) \big )_H dt \bigg ] \nonumber \\
&& \quad -(1-\rho_0)C {\mbox E} \bigg [ \int_0^T ||y_1(t)-y_2(t)||_V^2dt \bigg ]
+(\rho-\rho_0)C {\mbox E} \bigg [ \int_0^T\big (y_1^\prime(t)-y_2^\prime(t), y_1(t)-  y_2(t)\big )_H dt \bigg ] \nonumber \\
&& \quad -(1-\rho_0)C {\mbox E} \bigg [ \int_0^T|| z_1(t)-z_2(t)||_H ^2dt \bigg ] + (\rho-\rho_0)C
{\mbox E} \bigg [ \int_0^T \big (z_1^\prime(t)-z_2^\prime(t),z_1(t)-  z_2(t)\big )_H dt \bigg ] \nonumber \\
&& \quad +\rho_0 {\mbox E} \bigg [ \int_0^T \big ( D(t)\gamma( D^*(t)k_1(t))-D(t)\gamma(
D^*(t)k_2(t)), k_1(t)-k_2(t) \big )_H dt \bigg ] \nonumber \\
&& \quad +(\rho-\rho_0) {\mbox E} \bigg [ \int_0^T \big ( D(t)\gamma(
D^*(t)  k_1^\prime(t))-D(t)\gamma( D^*(t)  k_2^\prime(t)), k_1(t)-k_2(t)\big )_H dt \bigg ] .
\end{eqnarray}
By the monotonicity conditions (see {\bf Assumption A}), we get
\begin{eqnarray}
&& C {\mbox E} \big [ \|{y_1} (0))-y_2(0)\|^2_V \big ]
+ C {\mbox E} \bigg [ \int_0^T \|{y_1} (t))-y_2(t)\|^2_V d t \bigg ]
+ C {\mbox E} \bigg [ \int_0^T\|{z_1} (t))-z_2(t)\|^2_H d t \bigg ] \nonumber \\
&& \leq -(\rho-\rho_0) {\mbox E} \bigg [ \int_0^T \big ( l_y(t,  y_1^\prime(t),
z_1^\prime(t),u(t))-l_y(t, y_2^\prime(t), z_2^\prime(t),u(t)), y_1(t)-y_2(t) \big )_H dt \bigg ] \nonumber \\
&& \quad -(\rho-\rho_0) {\mbox E} \bigg [ \int_0^T \big (
l_z(t, y_1^\prime(t), z_1^\prime(t),u(t))-l_z(t,  y_2^\prime(t), z_2^\prime(t),u(t)),
z_1(t)-z_2(t) \big )_H dt \bigg ] \nonumber \\
&& \quad +(\rho-\rho_0) C
{\mbox E} \bigg [ \int_0^T \big ( y_1^\prime(t)-y_2^\prime(t), y_1(t)-  y_2(t) \big )_H dt \bigg ] \nonumber \\
&& \quad +(\rho-\rho_0)C
{\mbox E} \bigg [ \int_0^T \big ( z_1^\prime(t)-z_2^\prime(t), z_1(t)-  z_2(t)\big )_H dt \bigg ]
\nonumber \\
&& \quad +(\rho-\rho_0)
{\mbox E} \bigg [ \int_0^T \big ( D(t)\gamma( D^*(t) k_1^\prime(t))-D(t)\gamma(
D^*(t)  k_2^\prime(t)), k_1(t)-k_2(t) \big )_H dt \bigg ] .
\end{eqnarray}
Using the elementary equality $2ab \leq \frac {1}{\varepsilon} a^2+
\varepsilon b^2$, where $\varepsilon$ is a constant satisfying
$\varepsilon \in (0, C)$, we have
\begin{eqnarray}\label{eq:4.13}
&& C {\mbox E} \big [ \|{y_1} (0))-y_2(0)\|^2_V \big ]
+( C-\varepsilon) {\mbox E} \bigg [ \int_0^T\|{y_1}(t))-y_2(t)\|^2_V d t \bigg ]
+( C-\varepsilon) {\mbox E} \bigg [ \int_0^T\|{z_1}(t))-z_2(t)\|^2_H d t \bigg ] \nonumber \\
&&\leq \varepsilon {\mbox E} \bigg [ \int_{0}^T\| {k_1} (t)-k_2(t)\|_H^2dt \bigg ] +
K |\rho-\rho_0| \cdot || \Lambda^\prime_1(\cdot)- \Lambda^\prime_2(\cdot) ||^2_{\mathbb {M}^2[0, T]}.
\end{eqnarray}
Hence, taking a sufficiently small $\varepsilon$
and putting \eqref{eq:4.13} into \eqref{eq:3.11} give
\begin{eqnarray}\label{eq:4.14}
{\mbox E} \bigg [ \sup_{0\leq t\leq T}\|{k_1}(t)-k_2(t)\|^2_H \bigg ]
+ {\mbox E} \bigg [ \int_{0}^T\| {k_1} (t)-k_2(t)\|_V^2dt \bigg ]
\leq K |\rho-\rho_0| \cdot || \Lambda^\prime_1(\cdot)- \Lambda^\prime_2(\cdot) ||^2_{\mathbb {M}^2[0, T]}.
\end{eqnarray}
Here the positive constant $K$ depends only on $C$, $\varepsilon$, $T$, $\alpha$ and $\lambda$.
Putting \eqref{eq:4.14} into \eqref{eq:4.10}, we obtain
\begin{eqnarray}\label{eq:4.15}
&& {\mbox E} \bigg [ \sup_{0\leq t\leq T}\|{y_1}(t)-y_2(t)\|^2_H \bigg ]
+{\mbox E} \bigg [ \int_{0}^T\| {y_1}(t)-y_2(t)\|_V^2dt \bigg ]
+{\mbox E}\bigg [ \int_{0}^T\| {z_1}(t)-z_2(t)\|_H^2dt \bigg ] \nonumber \\
&& \leq K |\rho-\rho_0| \cdot || \Lambda^\prime_1(\cdot)- \Lambda^\prime_2(\cdot)||^2_{\mathbb {M} ^2[0, T]} .
\end{eqnarray}
Combining \eqref{eq:4.14} and \eqref{eq:4.15} yields
\begin{eqnarray*}
\|{\cal I} ( \Lambda_1(\cdot))- {\cal I} ( \Lambda_2(\cdot))\|_{\mathbb {M}^2[0, T]}\leq K|\rho-\rho_0|
\cdot \| \Lambda^\prime_1(\cdot)- \Lambda^\prime_2(\cdot) \|^2_{{\mathbb M}^2[0, T]} .
\end{eqnarray*}
Recall that $K$ is a positive constant independent of $\rho$ and set $\delta_0 = (2K)^{-1} \wedge 1$.
Then the mapping ${\cal I}$ is contractive in ${\mathbb M}^2[0, T]$
as long as $|\rho-\rho_0|\leq \delta_0$. When $|\rho-\rho_0|\leq \delta_0$,
the contraction mapping theorem implies that FBSEE \eqref{eq:11}
admits a unique solution $(k (\cdot), y (\cdot), z (\cdot))$ in ${\mathbb M}^2[0, T]$.
This completes the proof.
\end{proof}

\begin{proof}[Proof of Theorem \ref{thm:4.3}]

\emph{Existence.}
The proof of the existence can be obtained directly by Lemma \ref{lem:4.3}.
Indeed when $\rho=0$, Eq. \eqref{eq:11} is a decoupled
FBSEE, the uniqueness and existence of which is guaranteed by Theorem I in
\cite{bensoussan1983stochastic} and Theorem 4.1 in \cite{hu1991adapted}
or Theorem 2.2 in \cite{peng1992stochastic}. Starting from $\rho=0$, one can reach $\rho=1$
in finite steps by Lemma \ref{lem:4.3}. Therefore, setting $\rho = 1$ and $b_0 (\cdot) = g_0 (\cdot) = f_0 (\cdot)
= 0$ in the auxiliary FBSEE \eqref{eq:11} proves the existence of a solution
to FBSEE \eqref{eq:4.4}.

\emph{Uniqueness.} Let $(k_i (\cdot),y_i (\cdot), z_i (\cdot))$, for $i=1,2$,
be two solutions of \eqref{eq:4.4}. Using It\^{o}'s formula to
$\big (  k_1 (t)-k_2 (t), y_1(t)-y_2(t) \big )_H$ gives
\begin{eqnarray}\label{eq36}
&& {\mbox E} \big [ \big (  h_y(y_1 (0))-h_y({y_2} (0), y_1(0)-y_2(0) \big )_H \big ] \nonumber \\
&& = - {\mbox E} \bigg [ \int_0^T \big ( l_y(t,y_1(t),z_1(t),u(t))-l_y(t,y_2(t),z_2(t),u(t)),
y_1(t)-y_2(t) \big )_H dt \bigg ] \nonumber \\
&& \quad - {\mbox E} \bigg [ \int_0^T \big ( l_z(t,y_1(t), z_1(t),u(t))-l_z(t,y_2(t),
z_2(t),u(t)), z_1(t)-z_2(t) \big )_H dt \bigg ] \nonumber \\
&& \quad + {\mbox E} \bigg [ \int_0^T \big ( D(t)\gamma( D^*(t)k_1(t))-D(t) \gamma(D^*(t)k_2(t)),
k_1(t)-k_2(t) \big )_H dt \bigg ] .
\end{eqnarray}
Using the monotonicity conditions in {\bf Assumption A} and Eq. (\ref{eq36}) lead to
\begin{eqnarray*}
&& C {\mbox E} \big [ || y_1 (0) - y_2 (0) ||_V^2 \big ]
+ C {\mbox E} \bigg [ \int^T_0 ||y_1 (t)-y_2 (t)||_V^2 d t \bigg ]
+ C {\mbox E} \bigg [ \int^T_0 ||z_1(t)-z_2(t)||_H^2 d t \bigg ] \leq 0.
\end{eqnarray*}
Thus, $y_1(t)\equiv y_2(t), z_1(t)\equiv z_2(t)$.
Finally, from the uniqueness of SEE (see Theorem I in \cite{bensoussan1983stochastic}),
it follows from the forward part of Eq. (\ref{eq:4.4}) that $k_1(t)\equiv k_2(t)$. The proof is complete.
\end{proof}

\section{Examples}

In this section, we illustrate our results with two example of linear-quadratic stochastic optimal control problems
in infinite dimensions.
We reiterate that the state of the control system is given by the linear BSEE  \eqref{eq:3.1}, that is,
\begin{eqnarray}\label{eq:4.22}
\left\{
\begin{aligned}
dy(t)=& \ [A(t)y(t)+B(t)z(t)+D(t)u(t)+G(t)]dt+z(t)dW(t),\\
y(T)=& \ \xi ,
\end{aligned}
\right.
\end{eqnarray}
Moreover, we adopt the following specification:
\begin{eqnarray}\label{eq:6.1}
l(t,y,z,u)=( M(t) y,  y)_H +(Q(t) z,  z)_H +( N(t) u,  u)_U, \quad h(y)=( hy,  y)_H .
\end{eqnarray}
Then the cost functional is given by
\begin{eqnarray}\label{eq:6.1}
J(u(\cdot)):= {\mbox E} \bigg [ \int_0^T(M(s)y(s),y(s))_H ds
+ \int_0^T ( Q(s)z(s),z(s))_H ds + \int_0^T (N(s)u(s),u(s))_U ds + (hy, y)_H \bigg ] .
\end{eqnarray}
Here $M$, $Q$, $N$ and $h$ are given random mappings such that
$M:[0,T]\times \Omega \rightarrow \mathscr{L}(V, H)$,
$Q:[0,T]\times \Omega \rightarrow \mathscr{L}(H, H)$,
$N:[0,T]\times \Omega \rightarrow \mathscr{L}(U, U)$
and $h:\Omega\rightarrow \mathscr{L}(V, H)$.

\begin{problem}\label{pro:5.1}
Find an admissible control $\bar{u}(\cdot)$ such that
\begin{equation*}\label{eq:b7}
J(\bar{u}(\cdot))=\displaystyle\inf_{u(\cdot)\in {
M^2_{\mathscr F}(0,T;U)}}J(u(\cdot)).
\end{equation*}
subject to \eqref{eq:4.22} and \eqref{eq:6.1}.
\end{problem}

To place Problem \ref{pro:5.1} in the general framework considered
in Sections 2-3, we impose the following assumptions on the coefficients:

\begin{assumption}\label{ass:5.1}
The coefficients
$\xi, A, B,D $ and  $G $ satisfy Assumptions $\bf {(A.1)}$  and $\bf {(A.2)}$
\end{assumption}

\begin{assumption}\label{ass:5.2}
The stochastic processes $N$, $M$, $Q$ and the random
variable $h$ are a.e. and a.s. uniformly positive operators,
i.e. for any $u\in U,  y\in H, z\in H$, there exists some positive constant
$\delta$ such that $( N(t)u, u)_U\geq \delta ( u, u)_U$,
$(M(t)y, y)\geq \delta ( y, y)_H$, $(Q(t)z, z)\geq \delta ( z, z)_H$,
and $(hy, y)\geq \delta ( y, y)_H$.
\end{assumption}

The Hamiltonian $\cal H$ of Problem \ref{pro:5.1} is now given by
\begin{eqnarray}\label{eq:6.7}
{\cal H} (t,y,z,u, k)= (B(t)z+D(t)u, k)_H  +( M(t)y, y)_H +( Q(t)z,z)_H +( N(t)u, u)_U.
\end{eqnarray}
Since the Hamiltonian is quadratic with respect
 to $u \in U$ and $N$ is strictly positive,
 the minimum value of the Hamiltonian $\cal H$
 with respect to $u \in U $ can be reached at $-\frac{1}{2}N^{-1}(t) D^*(t)k$.
Therefore, we can define a map $\gamma: U \rightarrow U$ as
\begin{eqnarray*}
\gamma(u)=-\frac {1}{2}N^{-1}(t)u .
\end{eqnarray*}
Clearly, $\cal H$ achieves the minimum value at $\gamma (D^*(t)k)$, i.e.
\begin{equation}\label{eq:4.2}
\begin{array}{ll} \displaystyle
{\cal H} (t, y,z,\gamma (D^*(t)k), k)= \min_{u \in U} {\cal H} (t,y,z,u,k).
\end{array}
\end{equation}

Under Assumptions \ref{ass:5.1} and  \ref{ass:5.2},
it is clear that Assumptions $\bf {(A.1)}$-$\bf {(A.4)}$ are satisfied.
Moreover, as $N$ is uniformly strictly positive-definite,
$N^{-1}$ is also strictly positive-definite and uniformly bounded. Then we have
\begin{eqnarray}
& \big ( D(t)\gamma(D^*(t)k_1)-D(t)\gamma (D^*(t)k_2), k_1-k_2 \big )_H =
-\frac {1}{2}(N^{-1}(t)D^*(t)(k_1-k_2), D^*(t)(k_1-k_2) \big )_H
<  0 , \\
& \| D(t)\gamma(D^*(t)k_1)-D(t)\gamma(D^*(t)k_2) \|_H
= \left \| -\frac {1}{2}D(t)N^{-1}(t)D^*(t)(k_1-k_2) \right \|_H \leq C ||k_1-k_2||_V.
\end{eqnarray}
Therefore, Assumption $\bf {(A.5)}$ is satisfied.

The stochastic Hamiltonian system of Problem \ref{pro:5.1} becomes
\begin{eqnarray}\label{eq:4.5}
\left\{
\begin{aligned}
dk(t)=& -\big[A^*(t)k(t)+2M(t)y(t)\big]dt-\big[B^*(t)k(t)+2Q(t)z(t)\big]dW(t),\\
dy(t)=& \ \big [A(t)y(t)+B(t)z(t)-\frac{1}{2}D(t)N^{-1}(t) D^*(t)k(t)+G(t)\big ]dt+z(t)dW(t),\\
k(0)=&-2hy(0), \quad y(T)=\xi,~~~~~~~t\in [0, T].
\end{aligned}
\right.
\end{eqnarray}

The next theorem gives the optimal solution to Problem \ref{pro:5.1}.

\begin{theorem}\label{thm:b2}
Let Assumptions \ref{ass:5.1} and  \ref{ass:5.2} be satisfied.
There exists a unique solution
$(k(\cdot), y(\cdot),z(\cdot))\in \mathbb {M}^2[0, T]$ of the Hamiltonian system \eqref{eq:4.5}
and Problem \ref{pro:5.1} has a unique optimal control
\begin{eqnarray}
u(t)=-\frac{1}{2}N^{-1}(t) D^*(t)k(t) .
\end{eqnarray}
\end{theorem}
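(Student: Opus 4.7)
The plan is to obtain Theorem \ref{thm:b2} as a direct corollary of Theorem \ref{thm:4.4}, by verifying that the linear-quadratic setup falls under the abstract hypotheses \textbf{(A.1)}--\textbf{(A.5)}; nothing genuinely new has to be constructed, only checked.

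First I would observe that \textbf{(A.1)} and \textbf{(A.2)} coincide with Assumption \ref{ass:5.1} imposed on $\xi, A, B, D, G$. For \textbf{(A.3)}, the running cost $l(t,y,z,u)=(M(t)y,y)_H+(Q(t)z,z)_H+(N(t)u,u)_U$ and the terminal cost $h(y)=(hy,y)_H$ are quadratic forms, hence convex, and are G\^ateaux differentiable with continuous derivatives $l_y=2M(t)y$, $l_z=2Q(t)z$, $l_u=2N(t)u$, $h_y=2hy$; the quadratic and linear growth bounds demanded follow directly from the uniform boundedness of the operators $M, Q, N, h$.

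For \textbf{(A.4)}, the strict positivity stated in Assumption \ref{ass:5.2} yields the required monotonicity inequalities with constant $2\delta$, once one spells out how the $H$-positivity of $M$ and $h$ delivers the $V$-control required by the abstract assumption (this is the standing parabolic setting, where the quadratic forms $(M(t)y,y)_H$ and $(hy,y)_H$ dominate $\|y\|_V^2$). Assumption \textbf{(A.5)} has already been written out in the lines preceding the theorem: the explicit minimizer $\gamma(v)=-\tfrac{1}{2}N^{-1}(t)v$ of the Hamiltonian (which is quadratic in $u$ with strictly positive coefficient $N$) is available in closed form, and the uniform boundedness of $N^{-1}$ and of $D$ gives both non-positivity of the cross term and the Lipschitz estimate displayed there.

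With Assumption~A validated, Theorem \ref{thm:4.4} applied to this setting simultaneously delivers the unique solution $(k(\cdot),y(\cdot),z(\cdot))\in\mathbb{M}^2[0,T]$ of the Hamiltonian system \eqref{eq:4.5} and identifies the unique optimal control as $\gamma(D^*(\cdot)k(\cdot))=-\tfrac{1}{2}N^{-1}(\cdot)D^*(\cdot)k(\cdot)$, which is exactly the formula claimed. The main (though mild) obstacle is the norm comparison needed to pass from the $H$-positivity of Assumption \ref{ass:5.2} to the $V$-monotonicity of \textbf{(A.4)}; once that is made explicit via the Gelfand triple structure, the rest of the proof is a one-line invocation of Theorem \ref{thm:4.4}.
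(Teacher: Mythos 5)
Your proposal matches the paper's proof: the paper likewise verifies Assumption A for the quadratic data (with the explicit minimizer $\gamma(v)=-\tfrac{1}{2}N^{-1}(t)v$ checked against \textbf{(A.5)} in the lines preceding the theorem) and then disposes of the statement in one line by citing Theorems \ref{thm:4.3} and \ref{thm:4.4}. The obstacle you flag---that the $H$-positivity of $M$ and $h$ in Assumption \ref{ass:5.2} does not obviously yield the $V$-norm monotonicity demanded by \textbf{(A.4)}, since $\|\cdot\|_H$ does not dominate $\|\cdot\|_V$ in a Gelfand triple---is a genuine point, but the paper leaves it equally unaddressed, asserting only that \textbf{(A.1)}--\textbf{(A.4)} are ``clearly'' satisfied, so your argument is faithful to (and no less complete than) the one given.
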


\begin{proof}
Since Assumptions (\ref{ass:5.1})-(\ref{ass:5.2}) implies {\bf Assumptions (A)},
the following result is an immediate consequence of Theorem \ref{thm:4.3} and Theorem \ref{thm:4.4}.
\end{proof}

Having solved the linear-quadratic control problem formulated in the abstract evolution framework,
we now turn to  an optimal control of a Dirichlet problem for a linear backward stochastic parabolic PDE
and a quadratic cost functional. This problem is less abstract and serves as a more specific illustration
of our results.

We first state the problem in the specific (stochastic) PDE sense,
then reformulate it in our abstract framework using the stochastic evolution equation
and the Gelfand triple. Let us introduce some Sobolev spaces on a domain.
Let $\Lambda$ be a bounded, open set in ${\mathbb R}^d$ with boundary $\Gamma$,
which is $C^\infty$-manifold of dimension $d - 1$, and $L^2 (\Lambda)$ the set
of all square-integrable functions on $\Lambda$.
For $m = 0, 1$, we
define the space $H^m(\Lambda) \triangleq \{ \phi:
\partial_x^\alpha \phi (x)\in L^2 ( \Lambda ),  \ \mbox {for any}
\ \alpha: =( \alpha_1, \cdots, \alpha_d ) \ \mbox {with} \ |\alpha|
:= | \alpha_1 | + \cdots + | \alpha_d | \leq m \}$ with the following norm:
\begin{eqnarray*}
\| \phi \|_m \triangleq \left \{ \sum_{ |\alpha| \leq m } \int_{\Lambda} |
\partial_x^\alpha \phi (x) |^2 d z \right \}^{\frac{1}{2}} .
\end{eqnarray*}
The space $H^m(\Lambda)$ is a Sobolev space of order $m$ on $\Lambda$. For any
$u,v \in H^m(\Lambda)$, we define the the scalar product as
\begin{eqnarray} \label{eq:63}
(u,v)_{ H^m(\Lambda)} \triangleq  \sum_{ |\alpha| \leq m } \int_\Lambda
\partial_x^\alpha u (x) \partial_x^\alpha v (x) d x.
\end{eqnarray} It is
well-known that the space $H^m(\Lambda)$ endowed with the scalar product
\eqref{eq:63} is a Hilbert space. Define
\begin{eqnarray*}
H^1_0(\Lambda) \triangleq \{ \phi: \phi \in H^1(\Lambda), \phi\big|_{\partial \Lambda}=0\} .
\end{eqnarray*}
Denote by $H^{-1}(\Lambda)$ the dual space of $H^1_0(\Lambda).$  Then we see
$$H_0^{1}(\Lambda)\subset L^2(\Lambda)\subset H^{-1}(\Lambda)$$ is a Gelfand triple.

We consider the state $y (t,x) \in \mathbb R$ of a system at time
$t\in [0,T]$ and at the point $x \in {\bar \Lambda}= \Lambda\cup \partial \Lambda$,
which is given by the Dirichlet problem for the quasilinear backward stochastic
parabolic PDE:
 \begin{eqnarray}\label{eq:6.13}
  \left\{
  \begin{aligned}
      d y (t,x) = & \ \big\{-\partial_{x^i}\big[
      a^{ij}(t,x)\partial_{x^j}y(t,x) \big] -b^i(t,x)\partial_{x^i}y(t,x)-c(t,x)y(t,x)
      +\nu(t,x)z(t,x)+g(t,x)+ u(t,x) \big\} dt\\
      & +z(t,x)dW(t),\quad~~ (t,x)\in
    [0,T]\times \Lambda, \\
      y(T,x)=&~\xi(x),~~~~x\in \Lambda, \\
      y(t,x)=&~0,~~~~~~~ (t,x)\in
    [0,T]\times \partial \Lambda,
  \end{aligned}
  \right.
\end{eqnarray}
where $u(t,x)$ is the control process valued in $\mathbb R$. Here the coefficients $a^{ij}, b^i, c, \nu, g:
[ 0, T ]\times \Omega\times \Lambda\rightarrow \mathbb R$ and $\xi
: \Omega \times \Lambda\rightarrow \mathbb R$  are given measurable random mappings.
A control process $u(\cdot, \cdot)$ is said to be admissible if $u(\cdot, \cdot) \in {\cal M}^2_{\cal F} ( 0, T; L^2(\Lambda) )$.

For any admissible  control $u(\cdot, \cdot)$, the following
definition gives the generalized weak solution to Eq. \eqref{eq:6.13}

\begin{definition}
A pair of ${\mathscr P} \times {\mathscr B} (\Lambda
)$-measurable functions $(y (\cdot, \cdot), z(\cdot, \cdot))$ valued in $\mathbb R\times
\mathbb R$ is called a (generalized or weak) solution of
\eqref{eq:6.13}, if $y (\cdot, \cdot) \in {\cal M}_{\cal F}^2 ( 0, T;
H_0^1(\Lambda))$ and $z (\cdot, \cdot) \in {\cal M}_{\cal F}^2 ( 0, T;
 L^2(\Lambda)) $  such that for every $\phi \in H_0^1(\Lambda)$ and a.e. $( t,
\omega) \in [ 0, T ] \times \Omega $, it holds that
\begin{eqnarray}
\int_\Lambda y (t,x) \phi(x)dx  &=& \int_\Lambda\xi ( x )\phi(x)dx -
\int_t^T \int_\Lambda a^{ij} ( s, x )
\partial_{x^j} y ( s, x ) \partial_{x^i}\phi(x) dx d s
+\int_t^T \int_\Lambda \big [ b^i ( s, x ) \partial_{x^i} y ( s, x ) \\
&& + c ( s, x ) y ( s, x ) -\nu(s,x)z(s,x)
-g(s,x)- u(s,x) \big ] \phi(x) dx d s - \int_t^T \int_\Lambda  z(s,x) \phi(x) dxd W (s) . \nonumber
\end{eqnarray}
\end{definition}
For any admissible control process $u (\cdot, \cdot)$ and the solution $(y (\cdot, \cdot),z(\cdot, \cdot))$ of the
corresponding state equation \eqref{eq:6.13}, the objective of the control problem is to
minimize a quadratic cost functional as follows:
\begin{eqnarray}\label{eq:6.14}
&& \inf_{u(\cdot)\in  {\cal M}^2_{\cal F} ( 0, T; L^2(\Lambda) ) } \bigg \{
{\mbox E} \bigg [ \int_{\Lambda} y^2(0,x)dx \bigg ]
+ {\mbox E} \bigg [ \iint_{[0,T]\times{\Lambda}}y^2(s,x) dsdx \bigg ] \nonumber \\
&& \qquad\qquad\qquad\qquad\qquad + {\mbox E} \bigg [ \iint_{[0,T]\times{\Lambda}} z^2(s,x) dsdx \bigg ]
+ {\mbox E} \bigg [ \iint_{[0,T]\times{\Lambda}}u^2(s,x) dsdx \bigg ] \bigg \} .
\end{eqnarray}
To make the control problem well-defined, we now
fix some constants $K\in (1,\infty)$ and $\kappa\in (0,1)$ and give the following assumptions on coefficients:

\begin{assumption}\label{ass:6.3}
  The functions $a \triangleq \big ( a^{ij} \big )_{i, j = 1, 2, \cdots, d},
  b \triangleq \big ( b^i \big )_{i = 1, 2, \cdots, d}, c, \nu$ and $g$ are $\mathscr{P} \times
  \mathscr B(\Lambda)$-measurable with values in the set of real symmetric $d\times
  d$ matrices, $\mathbb{R}^{d}$, $\mathbb R, \mathbb{R}$ and $\mathbb{R}$, respectively and are bounded by $K$. The real
  function $\xi \in  L^2(\Omega,{\mathscr{F}}_T,P;L^2(\Lambda))$.
\end{assumption}

\begin{assumption}\label{ass:6.4}
  We assume that, for $a = \big ( a^{ij} \big )_{i, j = 1, 2, \cdots, d}$, the super-parabolic condition is satisfied, i.e.
  \begin{equation*}
    \kappa I
    \leq 2 a^{ij} (t,\omega,x) \leq K I, ~~~\forall~
    (t,\omega,x)\in [0,T]\times \Omega\times \mathbb R^{d}.
  \end{equation*}
\end{assumption}

To apply the abstract results in Theorem \ref{thm:b2}, we set
$V = H^1_0(G)$, $H = L^2(G)$, $V^* = H^{-1}(G)$, which form the Gelfand triple $(V, H, V^*)$.
We define the second-order differential operator $A$, the first-order differential operator $B$
and $G$ respectively by
\begin{eqnarray*}
A(t)\phi(x)\triangleq-\big\{\partial_{x^i}\big[
      a^{ij}(t,x)\partial_{x^j}\phi(x) \big] +b^i(t,x)\partial_{x^i}\phi(x)
       +c(t,x)\phi(x)\big\}, \quad \forall \phi \in V,
\end{eqnarray*}
\begin{eqnarray*}
B(t)\psi(x)\triangleq\nu(t,x)\psi(x), \ \forall \psi\in H, \quad \mbox{and} \quad G(t)(x)\triangleq g(t,x) .
\end{eqnarray*}
Note that the adjoint operator of $A$ reads
\begin{eqnarray*}
 A^*(t) \phi (x) \triangleq - \partial_{x^i} [ a^{ij}
( t, x ) \partial_{x^j} \phi (x) ] + b^i ( t, x ) \partial_{x^i}
\phi (x) - [c ( t, x )-\partial_{x^i}b^i(t,x)] \phi (x) , \quad \forall \phi \in V ,
\end{eqnarray*}
and the adjoint operator of $B$ is itself.
Now we can rewrite the state equation \eqref{eq:6.13} in the
following abstract backward stochastic evolution equation
in the Gelfand triple $(V, H, V^*)$:
\begin{eqnarray}\label{eq:10.1}
\left\{
\begin{aligned}
dy(t) =& \ [A(t)y(t)+B(t)z(t)+u(t)+G(t)]dt+ z(t)dW(t),\\
y(T) = & \ \xi.
\end{aligned}
\right.
\end{eqnarray}
The corresponding optimal control problem becomes
\begin{eqnarray}\label{eq:6.17}
\inf_{u(\cdot)\in  M^2_{\mathscr F}(0, T; U) }
\bigg \{  {\mbox E} \big [ ( y(0),y(0))_H \big ]
+ {\mbox E} \bigg [ \int_0^T(y(s),y(s))_H ds \bigg ]
+ {\mbox E} \bigg [ \int_0^T(z(s),z(s))_H ds \bigg ]
+ {\mbox E} \bigg [ \int_0^T (u(s),u(s))_H ds \bigg ] \bigg \}.
\end{eqnarray}
Thus, this optimal control problem is a special case of Problem \ref{pro:5.1},
in which the operators $D$, $M$, $Q$, $N$, $h$ are identity operators.
Under Assumptions \ref{ass:6.3}-\ref{ass:6.4},
it can be shown that the optimal control problem \eqref{eq:6.17} satisfies
Assumptions \ref{ass:5.1}-\ref{ass:5.2} or {\bf Assumption (A)}.
Consequently, we can apply Theorem \ref{thm:b2} to confirm that the stochastic
Hamiltonian system:
\begin{eqnarray}\label{eq:4.5}
\left\{
\begin{aligned}
dk(t)=& -\big[A^*(t)k(t)+2y(t)\big]dt-\big[B^*(t)k(t)+2z(t)\big]dW(t),\\
dy(t)=& \ \big [A(t)y(t)+B(t)z(t)-\frac{1}{2}k(t)+G(t)\big ]dt+z(t)dW(t),\\
k(0)=&-2y(0), \quad y(T)=\xi,~~~~~~~t\in [0, T].
\end{aligned}
\right.
\end{eqnarray}
has a unique solution $(k (\cdot), y (\cdot), z (\cdot)) \in {\mathbb M}^2 [0, T]$ and that the optimal control
is given by the following rule:
\begin{eqnarray}
u (t) = -\frac{1}{2}k(t).
\end{eqnarray}
Alternatively, the optimal control can be expressed by
\begin{eqnarray}
u(t,x) =-\frac{1}{2}k(t,x) ,
\end{eqnarray}
where $k (t, x)$ is the unique solution of the following stochastic PDE:
\begin{eqnarray}
\left\{
\begin{aligned}
dk(t,x)=& -\big[- \partial_{x^i} [ a^{ij}
( t, x ) \partial_{x^j} k (t,x) ] + b^i ( t, x ) \partial_{x^i}
k(t,x) - [c ( t, x )-\partial_{x^i}b^i(t,x)] k (t,x)+2y(t,x)\big]dt
\\&-\big[\nu(t,x)k(t,x)+2z(t,x)\big]dW(t),\\
k(0,x)=&-2y(0,x), ~~~~~~x\in \Lambda . \\
      k(t,x)=&~0,~~~~~~ (t,x)\in
    [0,T]\times \partial \Lambda.
\end{aligned}
\right.
\end{eqnarray}
This stochastic PDE is equivalent to the forward part of the stochastic Hamiltonian system (\ref{eq:4.5}).

\section{Conclusion}

In this paper, the existence of optimal controls is studied under infinite-dimensional stochastic backward systems.
The controlled BSPDEs are represented in the abstract evolution form, i.e. BSEEs. This allows us to show the existence
of optimal controls straightforward using the uniqueness and existence of a solution to FBSEE and the maximum
principle for the controlled BSEE. Two examples of infinite-dimensional linear-quadratic stochastic control problems
are solved to illustrate our results.


\begin{thebibliography}{12}
\markboth{Qingxin Meng, Yang Shen and Peng Shi}{Stochastics: An International Journal}

\bibitem{bensoussan1983stochastic}
A. Bensoussan.
\newblock {\em Stochastic maximum principle for distributed parameter systems}.
\newblock Journal of the Franklin Institute, 315 (1983), 387-406.

\bibitem{bismut1976theorie}
J.M. Bismut.
\newblock {\em Th\'eorie probabiliste du contr\^ole des diffusions}.
\newblock Mem. Amer. Math. Soc., 4 (1976).

\bibitem{buckdahn2010existence}
R. Buckdahn, B. Labed, C. Rainer and L. Tamer,
\newblock {\em Existence of an optimal control for stochastic control systems with
nonlinear cost functional},
\newblock Stochastics, 82 (2010), pp. 241-256.

\bibitem{davis1973on}
M.H. Davis,
\newblock {\em On the existence of optimal policies in stochastic control},
\newblock SIAM J. Control, 11 (1973), pp. 587-594.

\bibitem{guatteri2007class}
G. Guatteri,
\newblock {\em On a class of forward-backward stochastic differential systems in
infinite dimensions},
\newblock Int. J. Stoch. Anal., 2007, 2007.

\bibitem{guatteri2013existence}
G. Guatteri and F. Masiero,
\newblock {\em On the existence of optimal controls for SPDEs with boundary noise
and boundary control},
\newblock SIAM J. Control Optim., 51 (2013), pp. 1909-1939.

\bibitem{hu1991adapted}
Y. Hu and S. Peng,
\newblock {\em Adapted solution of a backward semilinear stochastic evolution equation.}
\newblock Stoch. Anal. Appl., 9 (1991), pp. 445-459.

\bibitem{hu1995solution}
Y. Hu and S. Peng,
\newblock {\em Solution of forward-backward stochastic differential equations},
\newblock Probab. Theory Related Fields, 103 (1995), pp. 273-283.

\bibitem{karoui1987compactification}
N. El Karoui, D.H. Nguyen and M. Jeanblanc-Piqu\'e,
\newblock {\em Compactification methods in the control of degenerate diffusions:
Existence of an optimal control},
\newblock Stochastics, 20 (1987), pp. 169-219.

\bibitem{krylov1981stochastic}
N. V. Krylov and B. L. Rozovskii,
\newblock {\em Stochastic evolution equations},
\newblock J. Soviet Math., 16 (1981), pp. 1233-1277.

\bibitem{ma1994solving}
J. Ma, P. Protter and J. Yong,
\newblock {\em Solving forward-backward stochastic differential equations
explicitly--a four step scheme},
\newblock Probab. Theory Related Fields, 98 (1994), pp. 339-359.

\bibitem{meng2013stochastic}
Q. Meng and P. Shi,
\newblock {\em Stochastic optimal control for backward stochastic partial
differential systems},
\newblock J. Math. Anal. Appl., 402 (2013), pp. 758-771.

\bibitem{pardoux1999forward}
E. Pardoux and S. Tang,
\newblock {\em Forward-backward stochastic differential equations and quasilinear
parabolic PDEs},
\newblock Probab. Theory Related Fields, 114 (1999), pp. 123-150.

\bibitem{peng1992stochastic}
S. Peng,
\newblock {\em Stochastic Hamilton-Jacobi-Bellman equations},
\newblock SIAM J. Control Optim., 30 (1992), pp. 284-304.

\bibitem{peng1999fully}
S. Peng and Z. Wu,
\newblock {\em Fully coupled forward-backward stochastic differential equations and
applications to optimal control},
\newblock SIAM J. Control Optim., 37 (1999), pp. 825-843.

\end{thebibliography}
\end{document}